\documentclass[abstracton, paper=a4, final, fontsize=11pt,DIV=12,bibliography=totoc]{scrartcl}


\usepackage[utf8]{inputenc}
\usepackage[T1]{fontenc}
\usepackage{lmodern}
\usepackage[english, french]{babel}
\usepackage{csquotes}
\usepackage{amsmath}
\usepackage{amsthm, amssymb, amsmath, amsfonts, mathrsfs, dsfont, esint,textcomp}
\usepackage{thmtools}
\usepackage[colorlinks=true, pdfstartview=FitV, linkcolor=blue, citecolor=blue, urlcolor=blue,pagebackref=false]{hyperref}
\usepackage{mathtools}
\usepackage[shortlabels]{enumitem}

\usepackage[backend=biber,giveninits,maxnames=5, maxalphanames=5,style=alphabetic,isbn=false, date=year, doi=false, eprint= true, url= true, sorting=ynt, sortcites = true]{biblatex}

 \usepackage{authblk}
  
\setkomafont{date}{\large}

\addtokomafont{disposition}{\rmfamily}

 \arraycolsep=1.4pt

\widowpenalty10000
\clubpenalty10000

\setcounter{biburlnumpenalty}{10000}
\setcounter{biburllcpenalty}{7000}
\setcounter{biburlucpenalty}{8000}
\usepackage{microtype}

\usepackage[notcite,notref,color]{showkeys}
\definecolor{labelkey}{gray}{.8}
\definecolor{refkey}{gray}{.8}

\definecolor{darkblue}{rgb}{0,0,0.7} 
\definecolor{darkred}{rgb}{0.9,0.1,0.1}
\definecolor{darkgreen}{rgb}{0,0.5,0}




\makeatletter
\def\cleartheorem#1{%
    \expandafter\let\csname#1\endcsname\relax
    \expandafter\let\csname c@#1\endcsname\relax
}
\makeatother
\cleartheorem{proof}
\declaretheoremstyle[
    spaceabove=6pt, 
    spacebelow=6pt, 
    headfont=\normalfont\bfseries, 
    bodyfont = \normalfont,
      qed=\qedsymbol,
]{myproofstyle} 
\declaretheorem[name={Proof}, style=myproofstyle, unnumbered]{proof}

\newtheorem{thm}{Theorem}[section]

\newtheorem{prop}[thm]{Proposition}
\newtheorem{lem}[thm]{Lemma}
\newtheorem{cor}[thm]{Corollary}
\theoremstyle{definition}

\newtheorem{rem}[thm]{Remark}

\newcommand\norm[1]{\left\lVert#1\right\rVert}

\newcommand\bra[1]{\left({#1}\right)}

\newcommand\abs[1]{\left\lvert#1\right\rvert}

\renewcommand{\le}{\leqslant}

\renewcommand{\leq}{\leqslant}
\renewcommand{\geq}{\geqslant}
\newcommand{\ls}{\lesssim}
\newcommand{\gs}{\gtrsim}
\renewcommand{\subset}{\subseteq}

\newcommand{\N}{\mathbb{N}}

\newcommand{\1}{\mathbf{1}}
\newcommand{\R}{\mathbb{R}}

\renewcommand{\d}{{\mathrm{d}}}

\renewcommand{\div}{\mathop{\rm div}}

\newcommand{\dd}{\, \mathrm{d}}

\newcommand{\dmin}{d_{\mathrm{min}}}

\renewcommand{\varrho}{\rho}

\newcommand{\w}{\mathrm{w}}

\def\dis{\displaystyle}

\DeclareMathOperator{\Id}{Id}

\DeclareMathOperator{\dv}{div}

\DeclareMathOperator{\dist}{dist}

\numberwithin{equation}{section}

\mathtoolsset{showonlyrefs}

\setcounter{secnumdepth}{2}

\bibliography{bib.bib}


\begin{document}

\title{Derivation of the Monokinetic Vlasov-Stokes equations}
\author[1]{Richard M. H\"ofer\thanks{richard.hoefer@ur.de}}
\author[2]{Amina Mecherbet\thanks{mecherbet@imj-prg.fr}}
\author[3]{Richard Schubert\thanks{schubert@iam.uni-bonn.de}}
\affil[1]{Faculty of Mathematics, University of Regensburg, Germany}
\affil[2]{Institut de Mathématiques de Jussieu -- Paris Rive Gauche, Université Paris Cit\'e, France}
\affil[3]{Institute for Applied Mathematics, University of Bonn, Germany}

\selectlanguage{english}

\maketitle

\begin{abstract}
   We consider a microscopic model of spherical particles with inertia in a Stokes flow. As the particle number grows to infinity and their size goes to zero we derive the monokinetic Vlasov-Stokes equations as mean-field limit. We do this under the assumption that the particles have initial velocities given by a Lipschitz velocity profile and prove the mean-field limit for times of the order of the inverse Lipschitz constant. Notably this is not a perturbative result. In particular, we do not require the inertia of the particles to vanish in the limit. Thereby the result improves upon the perturbative derivation in \cite{HoferSchubert23b} in the case of a monokinetic limit density.\\

   \noindent Keywords: Mean-field limit, sedimentation, Vlasov-Stokes equation, fluid-kinetic equations.\\

\noindent AMS subject classifications:  	35Q35, 35Q70, 70F40, 76D07, 76T20.
\end{abstract}

\setcounter{tocdepth}{1}
\tableofcontents



\section{Introduction}

The Vlasov-Stokes equation is a variant of the Vlasov-Navier-Stokes model which is commonly used to describe aerosols and thus appears in a variety of applications like the modeling of climate or combustion engines. In the present contribution we seek to rigorously justify the Vlasov-Stokes equation, starting from a microscopic model, that resolves individual particles.    
For $N$ spherical particles with radius $R>0$ and center $X_i\in \R^3$, occupying $B_i=B(X_i,R), i=1,\dots,N$, and a (fluid) velocity field $u_N:\R^3\to \R^3$, we consider the following system 
\begin{equation}\label{eq:StokesN}
\left\{
\begin{array}{rcll}
-\Delta u_N+\nabla p_N &=& 0\quad& \text{on } \R^3 \setminus \bigcup_i \overline{B_i}, \\
\div u_N &=& 0\quad& \text{on } \R^3 \setminus \bigcup_i \overline{B_i}, \\u_N&=& V_i\quad& \text{on } \overline{B_i},\\
u_N(x)&\to& 0\quad& \text{as }|x|\to \infty,
\end{array}
\right.
\end{equation}
completed with 
\begin{equation}\label{eq:ODEsN}
   \begin{array}{rl}
   \dot{X}_i= V_i, \qquad &\dot{V}_i= g+\dis N \int_{\partial B_i}\sigma(u_N,p_N) n\eqqcolon g -NF_i,\\
    \end{array}
\end{equation}
and the initial conditions 
\begin{equation}\label{eq:ICN}
   \begin{array}{rl}
   X_i(t=0)=X_i^0, \qquad &V_i(t=0)= V_i^0. 
   \end{array}
\end{equation}
Here, $\sigma(u,p)=-p\Id+2Du$ is the stress tensor, $Du=\frac 12 (\nabla u+\nabla u^T)$ is the symmetric gradient, $n$ is the unit outer normal of $B_i$, and $g$ is the gravitational force. This system describes particles immersed in Stokes flow, with a no-slip condition for the velocity at the particle boundary. The particle velocities evolve according to the viscous forces acting on the particles. \\

Monitoring the empirical phase-space density of the particles, $f_N=\frac 1N\sum_{i=1}^N \delta_{X_i,V_i}$ and assuming $R \to 0$, $6 \pi N R \to 1$ and $f_N\rightharpoonup f$ as $N\to \infty$,  (at least for $t=0$), one expects on a mesoscopic scale, the Vlasov-Stokes equation for the phase-space density
\begin{equation}\label{eq:VS}
    \left\{
\begin{array}{rcll}
\partial_t f +\div_x (vf)+\div((g+u-v)f) &=&0,& \text{on } (0,+\infty)\times \R^3\times \R^3 \\
-\Delta u+\nabla p &=& \int(v-u)f\d v,& \text{on } (0,+\infty)\times \R^3  \\
\div u &=& 0,& \text{on } (0,+\infty)\times \R^3,\\
f(t=0)&=&f_0,& \text{on } \R^3\times \R^3
\end{array}
\right.
\end{equation}
Unfortunately, it is notoriously difficult to rigorously derive this equation or close relatives, like the Vlasov-Navier-Stokes equation, from a microscopic fluid-particle system. The Vlasov-Fokker-Planck-Navier-Stokes equation has been derived in \cite{FlandoliLeocataRicci19, FlandoliLeocataRicci21} starting from an intermediate (not fully microscopic) system, while the Vlasov-Navier-Stokes System was formally justified starting from two coupled Boltzman equations in \cite{BernardDesvillettesGolseRicci17,BernardDesvillettesGolseRicci18}. For the justification of the homogenized fluid equation in the presence of particles without coupling to the movement of the particles, the literature has a lot more depth and for an overwiev we refer to \cite{Hofer23}. Recently, in \cite{HoferSchubert23b}, two of the authors were able to perturbatively derive \eqref{eq:VS} from the microscopic system \eqref{eq:StokesN},\eqref{eq:ODEsN},\eqref{eq:ICN} by introducing the dimensionless parameter $\lambda$ related to the particles' strength of inertia, and replacing equation $\dot{V}_i=g- N F_i$ by
\begin{align*}
    \dot{V}_i=\lambda\left (g- N F_i \right ).
\end{align*}
The statement, that the microscopic system is close to \eqref{eq:VS} for the simultaneous limit $N\to\infty$ and $\lambda\to \infty$ builds on the fact that the formal limit $\lambda\to \infty$ (vanishing inertia) yields the first order system
\begin{align}\label{eq:firstorder}
	\dot{X}_i= V_i, \qquad F_i=\frac 1N g,
\end{align}
The mean-field behaviour of the system \eqref{eq:StokesN}, \eqref{eq:firstorder} (together with initial conditions for the particle positions) is given by the transport-Stokes equation \cite{JabinOtto04, Hofer18MeanField, Mecherbet19, Hofer&Schubert} for the number density $\rho(x)$ of the particles,
\begin{align}\label{eq:TS}
	 \left\{
\begin{array}{rcll}
\partial_t \rho +\div (\rho (u+v_s)) &=&0,& \text{on } (0,+\infty)\times \R^3 \\
\partial_t u + \nabla p &=&\rho g,& \text{on } (0,+\infty)\times \R^3 \\
\div u &=& 0,& \text{on } (0,+\infty)\times \R^3  \\
\rho(t=0)&=&\rho_0,& \text{on } \R^3,
\end{array}
\right.
\end{align}
where $v_s$ is a constant vector characterizing the self-interaction of the particles. For effects of non-spherical particles, we also refer to \cite{Duerinckx23}. \\

Instead of starting with the inertialess microscopic system, the simultaneous limit $N,\lambda\to \infty$ for particles with inertia \eqref{eq:StokesN},\eqref{eq:ODEsN},\eqref{eq:ICN} is considered in \cite{HoferSchubert23}, yielding again \eqref{eq:TS} in the limit. However, ignoring the particle inertia leads to a systematic error of order $\lambda^{-1}$ in the approximation of the microscopic system (with inertia) by the macroscopic system (without inertia). The article \cite{HoferSchubert23b} shows, that this error can be avoided by comparing to the right mesoscopic limit \eqref{eq:VS}, which, contrary to the transport-Stokes equation, takes into account particle inertia. The derivation in \cite{HoferSchubert23b} works for absolutely continuous phase-space densities (although treating monokinetic densities seems possible with a similar approach) and unfortunately only in a perturbative setting (i.e. for $\lambda\to \infty$ as $N\to \infty$) due to difficuties in the control of particle concentration. \\

The present article shows that the consideration of monokinetic phase-space densities, i.e. densities of the form $f(d x,\d v)=\rho(\d x)\otimes \delta_{w(x)}(\d v)$ yields enough control to pass to the limit $N\to \infty$ for finite, fixed $\lambda$. \\

On the microscopic level this amounts to modify \eqref{eq:ICN} to 
\begin{equation}
	V_i(t=0)=\w_0(X_i^0),
\end{equation}
where $\w_0\in W^{1,\infty}(\R^3,\R^3)$ is the initial velocity profile. On the mesoscopic level, in the case of monokinetic densities, the Vlasov-Stokes equations take the form of a pressureless Euler system
\begin{equation}\label{eq:MVS}
    \left\{
\begin{array}{rcll}
\partial_t \rho +\div (\rho \w) &=&0,& \text{on } (0,+\infty)\times \R^3 \\
\partial_t \w + \w\cdot \nabla \w &=&g+u-\w,& \text{on } (0,+\infty)\times \R^3 \\
-\Delta u+\nabla p &=& (\w-u)\rho,& \text{on } (0,+\infty)\times \R^3  \\
\div u &=& 0,& \text{on } (0,+\infty)\times \R^3  \\
\w(t=0)=\w_0,&& \rho(t=0)=\rho_0,& \text{on } \R^3 
\end{array}
\right.
\end{equation}
For the well-posedness of this system we refer to \cite{Braham25}. The Cauchy problem for the related monokinetic Vlasov-Navier-Stokes (compressible and incompressible) equation has been considered in \cite{ChoiKwan16, HuangTangZou23}.

\subsection{Notation}

For a given set of particle positions $(X_i)_{i=1}^N$ and corresponding velocities $(V_i)_{i=1}^N$, coming for example as a solution of \eqref{eq:StokesN},\eqref{eq:ODEsN}, we define the empirical measures
$$
f^N(t)=\frac{1}{N} \underset{i}{\sum} \delta_{(X_i(t), V_i(t))}, \quad \rho^N(t)= \frac{1}{N} \sum_i \delta_{X_i(t)} = \int f^N(t) d v.
$$
The minimal distance between the particles is
\begin{align*}
    \d_{\min}(t)=\min_{i\neq j} |X_i(t)-X_j(t)|:=\min_{i\neq j} d_{ij}(t).
\end{align*}
We also denote (cf. \cite{HoferSchubert23}):
 \begin{align}
 S_\beta(t)= \underset{i = 1, \cdots N}{\max}\, \underset{j \neq i}{\sum}\frac{1}{|X_i(t)-X_j(t)|^\beta},
 	\label{sums}
 \end{align}

 and we will typically suppress the dependence on $t$ in the following.
 
For any $p\in[1,\infty)$ we set
$$
|V|^p_p=\underset{i=1}{\overset{N}{\sum}}|V_i|^p, \qquad
|V|_\infty=\underset{i=1,\cdots N}{\max} |V_i|
$$
For a given probability measure $\rho\in \mathcal{P}(\R^3)$ and $\w:\R^3\to \R^3$ we denote the corresponding monokinetic phase space density by
\begin{align}
    f(t,\d x, \d v) = \rho(t, \d x) \otimes \delta_{\w(t,x)}(\d v),
\end{align}
understood in the sense that 
\begin{align}
    \int_{\R^3 \times \R^3} \varphi(x,v) \dd f(t,\d x, \d v) = 
    \int_{\R^3} \varphi(x,\w(t,x)) \dd \rho(t,\d x) \quad \text{for all } \varphi \in C_b(\R^3).
\end{align}
We will denote by $\rho$ both the measure and its density, leading to the notation $\rho\in \mathcal{P}(\R^3) \cap L^\infty(\R^3)$. Furthermore, we denote $L^p$ norms on the whole space by
\begin{align}
	\norm{\cdot}_{L^p(\R^3)}=\norm{\cdot}_{L^p}=\norm{\cdot}_{p}.
\end{align} 

\subsection{Main result}
We consider the following assumptions:
\begin{align}
NR=\frac{1}{6\pi},\tag{H1}\label{ass:gamma}\\
\underset{N}{\sup}\, \frac{S_2(0)}{N}<+\infty \label{hyp_S_2(0)} \tag{H2}
\end{align}
The main result then reads as follows.
\begin{thm}\label{thm}
Assume \eqref{ass:gamma} and \eqref{hyp_S_2(0)}, $\w_0 \in W^{1,\infty}(\R^3,\R^3)$, and $\rho_0\in \mathcal{P}(\R^3) \cap L^\infty(\R^3)$. Let $\bar{T}$ be the time of existence of a solution $(\rho,\w)\in L^\infty(0,\bar{T},\mathcal{P}(\R^3) \cap L^\infty(\R^3))\times W^{1,\infty}([0,\bar{T}]\times\R^3)$ to the monokinetic Vlasov Stokes equation \eqref{eq:MVS}.
There exists $N^\ast\in \N$, $C>0$ and $T^*\leq \bar{T}$ depending on \eqref{hyp_S_2(0)}, and monotonously on $\| \w_0\|_{W^{1,\infty}}$, such that for all $N \geq N^\star$ and $t < T^*$:
\begin{align}
\mathcal{W}_2(f^N(t),f(t)) &\leq \left((1+\|\w_0\|_{W^{1,\infty}(\R^3)})\mathcal{W}_2(\rho^N_0,\rho_0)+CN^{-1/2}\right)e^{Ct},\label{ineq:wasserstein2}\\ 
d_{\min}(t) &\geq \frac{d_{\min}(0)}{2}.\label{ineq:dmin}
\end{align}
\end{thm}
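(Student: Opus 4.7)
The plan is to combine a coupled Gronwall estimate for a modulated energy and a spatial Wasserstein distance with a bootstrap argument that preserves the minimum inter-particle distance. Let me set
\[
E_N(t) := \frac{1}{N}\sum_{i=1}^N |V_i(t) - \w(t, X_i(t))|^2, \qquad D_N(t) := \mathcal{W}_2^2(\rho^N(t), \rho(t)).
\]
The monokinetic initial condition $V_i(0) = \w_0(X_i^0)$ gives $E_N(0) = 0$. Lifting the optimal transport plan between $\rho^N(t)$ and $\rho(t)$ to phase space via the Lipschitz map $\w(t,\cdot)$ yields the triangle-type bound $\mathcal{W}_2^2(f^N(t), f(t)) \leq E_N(t) + (1 + \|\w\|_{W^{1,\infty}}^2)\, D_N(t)$, which reduces \eqref{ineq:wasserstein2} to a Gronwall control of $E_N + D_N$.

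To run the Gronwall, I define the bootstrap time $T_N^\circ \leq \bar T$ as the largest time on which both $d_{\min}(t) \geq d_{\min}(0)/2$ and $S_2(t) \leq 2 S_2(0)$ hold. On $[0, T_N^\circ]$, the assumptions \eqref{ass:gamma}, \eqref{hyp_S_2(0)} and the method of reflections, developed for this microscopic Stokes model in \cite{HoferSchubert23, HoferSchubert23b}, give the expansion
\[
N F_i = V_i - g - u(t, X_i) + r_i, \qquad \frac{1}{N}\sum_i |r_i|^2 \leq C\bigl(E_N + D_N + N^{-1}\bigr),
\]
where $u$ is the Stokes solution of \eqref{eq:MVS} and the $N^{-1}$ residual accounts for single-particle self-interaction (of size $R = 1/(6\pi N)$) and the discretization error in approximating $\rho \w$ by its empirical counterpart. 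Subtracting the material derivative of $\w$ along $X_i(t)$, computed via \eqref{eq:MVS}, gives
\[
\frac{d}{dt}(V_i - \w(t, X_i)) = -(V_i - \w(t, X_i)) - \big((V_i - \w(t, X_i))\cdot \nabla\big)\w(t, X_i) - r_i.
\]
Scalar-multiplying by $V_i - \w(t, X_i)$, summing over $i$, and dividing by $N$ produces $\frac{d}{dt} E_N \leq C(E_N + D_N + N^{-1})$. The spatial Wasserstein satisfies the transport stability estimate $\frac{d}{dt} D_N^{1/2} \leq \|\nabla \w\|_\infty D_N^{1/2} + E_N^{1/2}$, obtained by differentiating along a coupling that is optimal at time $t$ and using that $\rho^N, \rho$ are transported by $\{V_i\}$ and $\w$ respectively. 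Coupling these two inequalities and applying Gronwall establishes \eqref{ineq:wasserstein2} on $[0, T_N^\circ]$.

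To close the bootstrap and derive \eqref{ineq:dmin}, I use that for any $i \neq j$, $\frac{d}{dt}|X_i - X_j| \leq \|\nabla \w\|_\infty |X_i - X_j| + |V_i - \w(t, X_i)| + |V_j - \w(t, X_j)|$, which requires an $\ell^\infty$ bound on the velocity deviations. This I obtain via a separate pointwise Gronwall based on a uniform version of the Stokes force estimate; the error is again controlled by $S_2/N$ under \eqref{hyp_S_2(0)} and yields $\max_i |V_i - \w(t, X_i)| \leq C\bigl(\mathcal{W}_2(\rho_0^N, \rho_0) + N^{-1/2}\bigr) e^{Ct}$. For $N \geq N^\star$ and $t \leq T^\ast$ with $T^\ast$ depending only on $\|\w_0\|_{W^{1,\infty}}$ and \eqref{hyp_S_2(0)}, this gives $d_{\min}(t) \geq e^{-\|\nabla \w\|_\infty t} d_{\min}(0) - o(1) \geq d_{\min}(0)/2$, while $S_2(t) \leq 2 S_2(0)$ is preserved by the clustering argument of \cite{HoferSchubert23}, closing the bootstrap.

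The main obstacle I anticipate is the promotion from the $L^2$-averaged control of velocity deviations that arises naturally from the modulated energy $E_N$ to the pointwise $\ell^\infty$ control needed to prevent particles from coming close. The resolution exploits both the Lipschitz regularity of $\w$ and the quantitative bounds on $S_\beta$ provided by \eqref{hyp_S_2(0)}, and is precisely the delicate step where the monokinetic structure is essential: in the non-monokinetic setting of \cite{HoferSchubert23b}, the lack of such a pointwise bound forced the authors into a perturbative regime $\lambda \to \infty$.
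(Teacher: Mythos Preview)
Your modulated-energy/spatial-Wasserstein scheme for the convergence estimate is broadly compatible with the paper's Step~2, but the control of $d_{\min}$ in your last two paragraphs contains a genuine gap, and it is precisely the gap the paper singles out as the main obstacle.

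You propose to bound $\frac{d}{dt}|X_i - X_j|$ by routing through $\w(t,X_i)$ and then invoking a pointwise estimate $\max_i |V_i - \w(t,X_i)| \leq C(\mathcal{W}_2(\rho_0^N,\rho_0) + N^{-1/2})e^{Ct}$. Even granting this bound, it is not strong enough: assumption \eqref{hyp_S_2(0)} only guarantees $d_{\min}(0) \gtrsim N^{-1/2}$, while your error term is of size $\mathcal{W}_2(\rho_0^N,\rho_0) + N^{-1/2}$. The theorem places no smallness assumption on $\mathcal{W}_2(\rho_0^N,\rho_0)$ (for i.i.d.\ samples in $\R^3$ it is typically $\sim N^{-1/3} \gg N^{-1/2}$), and even the pure $N^{-1/2}$ contribution is only \emph{comparable} to $d_{\min}(0)$, not small against it. Your line ``$d_{\min}(t) \geq e^{-\|\nabla \w\|_\infty t} d_{\min}(0) - o(1) \geq d_{\min}(0)/2$'' therefore does not follow: the subtracted term is not $o(d_{\min}(0))$. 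The paper makes exactly this point in the Strategy subsection, noting that one expects $|V_i(t) - \w(t,X_i(t))| \gtrsim N^{-1/3} \gtrsim d_{\min}$, so a triangle inequality through the mesoscopic profile cannot close.

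The paper's resolution is to control $d_{ij}$ \emph{without ever referring to the limit solution}. Using the purely microscopic Lipschitz-type estimate \eqref{eq:estimate_mean_u^N_A_i_Lip} of Proposition~\ref{prop:app_Stokes}, one gets directly
\[
\frac{d}{dt}|V_i - V_j| \leq -|V_i - V_j| + C\Lambda^2\, d_{ij},
\]
which together with $|\dot d_{ij}| \leq |V_i - V_j|$ forms a closed ODE system in $(|V_i-V_j|, d_{ij})$. The monokinetic hypothesis enters \emph{only} through the initial condition $|V_i^0 - V_j^0| \leq \|\nabla \w_0\|_\infty\, d_{ij}(0)$. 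An explicit Gronwall then yields $d_{ij}(t) \geq d_{ij}(0)/2$ for $t < T_0(\|\nabla \w_0\|_\infty,\Lambda)$, independently of $N$ and of $\mathcal{W}_2(\rho_0^N,\rho_0)$. This is the missing idea in your proposal; once you have it, your Wasserstein argument (or the paper's transport-plan version) runs on $[0,T^\ast]$ and gives \eqref{ineq:wasserstein2}.
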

\begin{rem}
    \begin{itemize}
    \item A solution to \eqref{eq:MVS} is here a Langrangian solution $\rho$ to the transport equation for the density and a strong solution $\w$ to the pressureless Euler equation. See \cite{Braham25}
    \item Assumption \eqref{ass:gamma} simplifies the limiting equations. However it is completely unproblematic to instead assume $\lim_{N\to\infty} NR=\gamma_\ast\in (0,\infty)$ which yields a prefactor $\gamma_\ast$ in front of (both instances of) $(u-\w)$ in \eqref{eq:MVS}.
    \item It is not necessary to choose the initial velocities according to $\w_0$. It suffices that the initial empirical phase space densities converge to a monokinetic density together with  a Lipschitz type estimate $|V_i^0 - V_j^0| \leq C d_{ij}(0)$. For the sake of a leaner presentation, we do not include this generalization. 
    \item Note that, by considering the term involving $d_{\min}$, assumption \eqref{hyp_S_2(0)} implies the lower bound 
\begin{equation}\label{assump_d_min}
d_{\min}(0)\geq C N^{-1/2}.
\end{equation}
This is slightly better than the lower bound 
\begin{equation}\label{assump_d_min_old}
d_{\min}(0)\gg N^{-17/45},
\end{equation}
used in \cite{HoferSchubert23b} for the perturbative derivation of the Vlasov-Stokes system.
    \end{itemize}
\end{rem}

\subsection{Strategy}
As employed in other contributions before, the key to the proof is to control the Wasserstein distance $\mathcal{W}_2(f^N(t),f(t))$ and  the particle distances $d_{ij}$.
The main novelty with respect to \cite{HoferSchubert23b} is the recognition, that in the monokinetic case, it is possible to uniformly control  $d_{ij}$ for small times depending on the Lipschitz constant of the initial monokinetic velocity field only. 

The point is to integrate the velocity equation in \eqref{eq:ODEsN} and to use the approximate Stokes law $F_i=6\pi R(V_i-(u)_i)$ (\cite[Lemma 5.1]{HoferSchubert23}), where $(u)_i$ is some local average around $X_i$ of the velocity field that is comparable to the mean-field velocity and for which we have control of the Lipschitz constant as long as we control $S_2$ as in \eqref{hyp_S_2(0)}. We then estimate
 \begin{align*}
     |V_i-V_j|(t)  &\leq |V_i^0-V_j^0| + \int_0^t -|V_i-V_j | + | (u)_i-(u)_j|\dd s\\
     &\leq -|V_i^0-V_j^0| + \int_0^t -|V_i-V_j | + C d_{ij}\dd s.
 \end{align*}
A Gronwall type argument  yields
$$
|V_i(t)-V_j(t)| \leq  \left( |V_i^0-V_j^0|+ C\int_0^t d_{ij}(s) e^s\dd s\right) e^{-t}
$$
Since $|\frac{d}{dt} d_{ij}| \leq |V_i-V_j|$ and $V_i(0)=\w_0(X_i(0))$, another ODE argument eventually yields 
\begin{align*}
d_{ij}(t) \geq d_{ij}(0) \left( 2-e^{C t} + \frac{\|\nabla \w_0\|_\infty}{C+1} \left(e^{- t}-e^{C t}   \right) \right) 
\end{align*} 
where $C$ depends on the control of $S_2$ on $[0,t]$. Since $S_2$ itself is controlled by $S_2(0)$ and the control on $d_{ij}$, this provides 
 the desired control of the distances $d_{ij}$ and $S_2$ for short times.
 
We emphasize that in contrast to the arguments in \cite{HoferSchubert23, HoferSchubert23b}, this argument does not exploit good properties of the solution to the limit equation.
In fact we were unable to significantly improve the estimates on $d_{ij}$ through such properties. Indeed, to do so, the second order nature of the problem would naively require an estimate of the form
$$|V_i(t) - \w(t,X_i(t))| \ll \dmin.$$
This seems  impossible because due to discretization errors we expect at least $|V_i(t) - \w(t,X_i(t))| \gtrsim N^{-1/3} \gtrsim \dmin$.
Consequently, a major drawback of our result is that we are unfortunately not able to show that the time $T_\ast$, until which convergence to the limit holds, coincides with the life time $\bar T$ of the limit solution. 

The next sections are organized as follows,  in order to compare the microscopic to the mesoscopic quantities, we introduce in section \ref{section2} the main intermediate velocity fields and densities as well as some related estimates. These estimates are a slight adaptation of results from \cite{HoferSchubert23} and we provide their proof in the appendix \ref{appendix} for the sake of self-containedness. Section \ref{section3} is devoted to the proof of the main Theorem \ref{thm}.

\section{Preliminaries}\label{section2}

In the following proposition we gather results obtained in \cite[Lemma 5.1, Remark 5.2, Lemma 5.3]{HoferSchubert23}.
\begin{prop}\label{prop:app_Stokes}
Let $R>0$, and $4R \leq d\leq \dmin/2$. Let $A = B_d(0) \setminus B_{d/2}(0)$. Then, there exists a weight $\omega \colon A \to \R^{3\times3}$ with
\begin{align} \label{omega}
    \fint_A \omega \dd x = \Id, \qquad    \|\omega\|_\infty \leq C 
\end{align}
for some universal constant $C$, such that setting $\omega_i(\cdot)=\omega(\cdot -X_i)$ defined on $A_i = B_d(X_i)\setminus B_{d/2}(X_i)$, and with $u_N$ as in \eqref{eq:StokesN}, we have
\begin{align}\label{eq:br}
    F_i=6\pi R\left(V_i-\fint_{A_i}\omega_i u_N\right).
\end{align}
 \begin{align}\label{eq:estimate_mean_u^N_A_i}
 \left|\fint_{A_i}\omega_i u_N \right| &\lesssim \underset{j \neq i}{\sum} \frac{|F_j|}{d_{ij}}+ R^3 S_2 S_4^{1/2} |F|_2,\\
\label{eq:estimate_mean_u^N_A_i_Lip}
 \left|\fint_{A_i}\omega_i u_N -\fint_{A_j}\omega_j u_N \right| &\lesssim d_{ij} \left( |F|_\infty S_2+R^3 S_2 S_6^{1/2}|F|_2  \right).
 \end{align}
\end{prop}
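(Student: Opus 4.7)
The plan is to follow the method-of-reflections strategy of \cite{HoferSchubert23}: encode the effect of the other particles on particle $i$ through a weighted annular average of $u_N$ that mimics point evaluation of a ``single-particle background'' velocity at $X_i$.

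To prove \eqref{eq:br}, I would consider the auxiliary single-particle Stokes problem of $B_i$ translating at velocity $V_i - \bar{u}_i$ in a quiescent fluid, where $\bar{u}_i := \fint_{A_i}\omega_i u_N$. Its drag is $6\pi R(V_i - \bar{u}_i)$ by the classical Stokes formula. The weight $\omega$ is engineered so that $\fint_A \omega = \Id$ and $\|\omega\|_\infty \le C$, and, more importantly, so that the averaging $\fint_{A_i}\omega_i(\cdot)$ exactly reproduces the ``effective'' background value that a single-particle representation would assign at $X_i$ to the Stokeslet/doublet field generated by the other particles. A Lorentz reciprocal identity between $u_N$ and this auxiliary flow, localized by $\omega$ on $A_i$ (which lies in the fluid domain since $d \le \dmin/2$), then identifies $F_i$ with $6\pi R(V_i - \bar{u}_i)$: the surface terms on $\partial B_d(X_i)$ are converted into the weighted volume integral via the structure of $\omega$, while the surface terms on $\partial B_i$ match the two drag forces.

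For \eqref{eq:estimate_mean_u^N_A_i}, I would represent $u_N$ via the Oseen tensor $\Phi$ and its multipole expansion around each $X_j$: to leading order $u_N(x) \approx \sum_j \Phi(x - X_j) F_j$, with higher-order corrections scaling as $R^2 |\nabla \Phi|$ and more. For $x \in A_i$ and $j \neq i$, $|x - X_j| \sim d_{ij}$, so averaging yields the leading $\sum_{j\neq i}|F_j|/d_{ij}$. The multipole remainder contributes sums of the form $R^3|F_j|/d_{ij}^3$, which two successive Cauchy--Schwarz inequalities bound by $R^3 S_2 S_4^{1/2}|F|_2$. Estimate \eqref{eq:estimate_mean_u^N_A_i_Lip} is obtained in the same way by replacing $\Phi$ and its derivatives by their differences between points of $A_i$ and $A_j$: the mean value theorem pulls out a factor $d_{ij}$, the leading term becomes $d_{ij}|F|_\infty S_2$, and the remainder receives an additional power of $1/d_{ik}$ that upgrades $S_4^{1/2}$ to $S_6^{1/2}$.

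The main obstacle is the design of $\omega$ making \eqref{eq:br} an \emph{exact} identity rather than an approximate one: the weight must simultaneously average to identity, remain universally bounded, and arrange cancellations so that the background contributions of all other particles collapse onto the single point evaluation $\bar{u}_i$. Once this algebraic identity is in place, the quantitative bounds \eqref{eq:estimate_mean_u^N_A_i} and \eqref{eq:estimate_mean_u^N_A_i_Lip} follow from standard decay properties of the Oseen tensor combined with Cauchy--Schwarz.
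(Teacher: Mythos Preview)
The paper does not supply its own proof but quotes \cite[Lemma~5.1, Remark~5.2, Lemma~5.3]{HoferSchubert23}. Your plan for \eqref{eq:br}---constructing $\omega$ so that the weighted annular average recovers the exact background velocity and then reading off Stokes' drag law via a reciprocity identity---is in line with that reference.

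For the estimates \eqref{eq:estimate_mean_u^N_A_i}--\eqref{eq:estimate_mean_u^N_A_i_Lip}, however, there is a genuine gap. You claim the remainder after the Stokeslet approximation contributes sums of the form $R^3 |F_j|/d_{ij}^3$ and that ``two successive Cauchy--Schwarz inequalities'' convert this into $R^3 S_2 S_4^{1/2}|F|_2$. Neither step works as stated. First, $u_N$ solves a \emph{coupled} boundary-value problem; subtracting $\sum_j \Phi(\cdot - X_j)F_j$ leaves not only the single-sphere doublet corrections but the full method-of-reflections tail, which is not a pointwise sum over $j$ of a simple kernel that you can bound term by term. Second, no Cauchy--Schwarz decomposition of $\sum_j |F_j|/d_{ij}^3$ produces the factor $S_2\, S_4^{1/2}$: you get $S_6^{1/2}|F|_2$ or $S_2^{1/2}S_4^{1/2}|F|_\infty$, but never $S_2$ to the first power. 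That factor is the signature of an \emph{energy} bound, not a pointwise one.

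The argument in \cite{HoferSchubert23}---of which the proof of Lemma~\ref{lem1} in the appendix here is an adaptation---is duality-based. One introduces $w_0$ solving $-\Delta w_0 + \nabla p = \sum_j F_j \delta_{\partial B_j}$, writes $\fint_{A_i}\omega_i(u_N-w_0)$ as $2\int \nabla\psi\cdot D(u_N-w_0)$ for a divergence-free extension $\psi$ of the dual function $\varphi$ (solving $-\Delta\varphi+\nabla p=\omega_i^T W\,\delta_{A_i}$), and combines the energy estimate $\|\nabla(u_N-w_0)\|_{2}\lesssim R^{3/2}S_2|F|_2$ with $\|\nabla\psi\|_{2}\lesssim\|\nabla\varphi\|_{L^2(\cup_j B_j)}\lesssim R^{3/2}S_4^{1/2}|W|$. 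The product is exactly the $R^3 S_2 S_4^{1/2}|F|_2$ term; the leading $\sum_{j\neq i}|F_j|/d_{ij}$ comes from $w_0$ itself. In \eqref{eq:estimate_mean_u^N_A_i_Lip} the dual function for the \emph{difference} of two annular averages decays one order faster, which is what upgrades $S_4^{1/2}$ to $S_6^{1/2}$.
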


We will use $\omega$ from this proposition in the following and set $\omega_i(\cdot)=\omega(\cdot -X_i)$. 

\begin{cor}\label{blub} Under the assumptions of Proposition \ref{prop:app_Stokes} we have
    \begin{align*}
|NF-V|_\infty \lesssim  \frac{|V|_2}{\sqrt{N}} \left(\left(\frac{S_2}{N}\right)^{1/2}+ \frac{1}{\sqrt{N}}\frac{S_2}{N}\frac{1}{N^2d_{\min}^2} \right)
\end{align*}
\end{cor}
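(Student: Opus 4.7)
The plan begins by using \eqref{eq:br} together with assumption \eqref{ass:gamma}, i.e.\ $6\pi R = 1/N$, which yields the clean identity
\[
NF_i - V_i = -\fint_{A_i}\omega_i u_N =: -U_i,
\]
reducing the task to controlling $|U|_\infty$. Applying the pointwise estimate \eqref{eq:estimate_mean_u^N_A_i} from Proposition \ref{prop:app_Stokes} and handling the first sum by Cauchy--Schwarz ($\sum_{j\neq i}|F_j|/d_{ij} \leq S_2^{1/2}|F|_2$), together with the elementary bound $S_4 \leq S_2/d_{\min}^2$ (hence $S_4^{1/2} \leq S_2^{1/2}/d_{\min}$), gives
\[
|U|_\infty \lesssim \bigl(S_2^{1/2} + R^3 S_2^{3/2}/d_{\min}\bigr)\,|F|_2.
\]

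The main technical step is to replace $|F|_2$ by $|V|_2/N$. Since $NF = V - U$, one has $|F|_2 \leq N^{-1}(|V|_2 + |U|_2)$, so what is needed is an $\ell^2$ estimate for $|U|_2$. Squaring \eqref{eq:estimate_mean_u^N_A_i} and summing over $i$ — using the double Cauchy--Schwarz type bound $\sum_i\bigl(\sum_{j\neq i}|F_j|/d_{ij}\bigr)^2 \leq N S_2\,|F|_2^2$ — yields $|U|_2 \lesssim \sqrt{N S_2}\,(1 + R^3\sqrt{S_2 S_4})\,|F|_2$. Under the standing hypotheses \eqref{hyp_S_2(0)}, \eqref{assump_d_min}, and with $R \sim 1/N$ by \eqref{ass:gamma}, the correction $R^3\sqrt{S_2 S_4}$ is of order $N^{-3/2}$ while $\sqrt{S_2/N}$ is bounded; hence the self-consistent inequality $|U|_2 \lesssim \sqrt{S_2/N}\,(|V|_2 + |U|_2)$ closes for $N$ sufficiently large, yielding $|F|_2 \lesssim |V|_2/N$.

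Combining the two bounds and inserting $R = 1/(6\pi N)$ then produces
\[
|NF - V|_\infty \lesssim \frac{|V|_2 S_2^{1/2}}{N} + \frac{|V|_2 S_2^{3/2}}{N^4 d_{\min}},
\]
whose first summand coincides with $\frac{|V|_2}{\sqrt N}\bigl(S_2/N\bigr)^{1/2}$, matching the first term of the stated bound. To arrange the second summand into the claimed organization $\frac{|V|_2}{\sqrt N}\cdot\frac{1}{\sqrt N}\frac{S_2}{N}\frac{1}{N^2 d_{\min}^2}$, one further uses the crude bound $S_2 \leq N/d_{\min}^2$ (obtained by summing $1/d_{ij}^2 \leq 1/d_{\min}^2$ over the $N-1$ neighbors) to trade a factor of $S_2^{1/2}$ for $\sqrt N/d_{\min}$, after which the powers of $N$ and $d_{\min}$ can be rearranged into the displayed form. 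I expect the principal obstacle to be not conceptual but bookkeeping: verifying that the self-consistency loop actually closes (and that the correction terms are truly negligible) uniformly for $N$ large, and matching the final algebraic expression precisely in the form claimed in the statement.
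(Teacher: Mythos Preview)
There are two concrete gaps in your argument.

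\textbf{The bound $|F|_2 \lesssim |V|_2/N$.} Your self-consistency loop produces $|U|_2 \lesssim c\sqrt{S_2/N}\,(|V|_2 + |U|_2)$, and you want to absorb $|U|_2$ from the right. This requires $c\sqrt{S_2/N} < 1$, i.e.\ $S_2/N$ \emph{small}, not merely bounded. The Corollary, however, is stated only under the assumptions of Proposition~\ref{prop:app_Stokes} (essentially $4R \le d \le \dmin/2$); neither \eqref{hyp_S_2(0)} nor any smallness of $S_2/N$ is available here, and nothing improves as $N\to\infty$ since $S_2/N$ need not tend to zero. The paper avoids this entirely by invoking the energy estimate $|F|_2 \lesssim R|V|_2$ (from \cite[Proposition~3.2]{HoferSchubert23}), which is unconditional: it comes from testing the Stokes system and using the variational characterization of the drag forces, and needs no smallness.

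\textbf{The second term is off by $\sqrt N$.} From $S_4^{1/2} \le S_2^{1/2}/d_{\min}$ you obtain the second summand $|V|_2 S_2^{3/2}/(N^4 d_{\min})$, and then trade $S_2^{1/2} \le \sqrt N/d_{\min}$ to get
\[
\frac{|V|_2 S_2^{3/2}}{N^4 d_{\min}} \le \frac{|V|_2 S_2}{N^{7/2} d_{\min}^2},
\]
whereas the claimed term is $\dfrac{|V|_2}{\sqrt N}\cdot \dfrac{1}{\sqrt N}\dfrac{S_2}{N}\dfrac{1}{N^2 d_{\min}^2} = \dfrac{|V|_2 S_2}{N^{4} d_{\min}^2}$. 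Your bound is larger by a factor $\sqrt N$, so the rearrangement does not go through. The culprit is that the chain $S_4 \le S_2/d_{\min}^2 \le N/d_{\min}^4$ wastes a full factor of $N$. The paper instead uses the packing estimate $S_4 \lesssim d_{\min}^{-4}$ (disjointness of the balls $B_{d_{\min}/2}(X_j)$, cf.\ \cite[Lemma~4.8]{NiethammerSchubert19}), which gives directly
\[
R^3 S_2 S_4^{1/2} \lesssim \frac{S_2}{N^3 d_{\min}^2} = \frac{S_2}{N}\cdot\frac{1}{N^2 d_{\min}^2},
\]
and then multiplication by $|F|_2 \lesssim |V|_2/N$ yields exactly the stated form.
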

\begin{proof}
Using \eqref{eq:br}, \eqref{eq:estimate_mean_u^N_A_i}, \eqref{ass:gamma} and the fact that $S_4 \lesssim \dmin^{-4}$ (see e.g.  \cite[Lemma 4.8]{NiethammerSchubert19}), we have
    \begin{align*}
|NF_i - V_i|& \lesssim  |F|_2 (S_2^{1/2}+ R^3S_2S_4^{1/2})\\
&\lesssim |F|_2 \left({S_2}^{1/2}+ \frac{S_2}{N}\frac{1}{N^2d_{\min}^2} \right)
\end{align*}
Moreover, from \cite[Proposition 3.2]{HoferSchubert23} we have 
 $$|F|_2 \lesssim R |V|_2 $$
which allows to conclude.
\end{proof}
\subsection{Reminder on the 2-Wasserstein distance}
We recall that the 2-Wasserstein distance minimizes the quadratic cost among all transport plans $\pi \in \Pi(f^N,f)$ i.e. any probability measure $\pi \in \mathcal{P}(\R^6 \times \R^6)$ having $f^N$ (resp. $f$) as a first (resp. second) marginal :
$$
\mathcal{W}_2(f^N,f)^2:=\underset{\pi \in \Pi(f^N,f)}{\inf}\left\{ \int\left(|x_1-x_2|^2+|v_1-v_2|^2\right)  \dd \pi(x_1 ,v_1 ,x_2,v_2) \right\}
$$
We aim then to construct a transport plan following the trajectories of the characteristic flows. Let $F^N$ be a vector field such that $F^N(t,X_i(t))=-F_i(t)$. We introduce the following notations
\begin{equation}\label{eq:flowN}
\begin{array}{lr}
\left\{
\begin{array}{rcl}
\dot{Y}^N(t,X_i^0)&=&W^N(t,X_i^0)=V_i(t) \\
Y^N(0,X_i^0)&=& X_i^0
\end{array}
\right.,&
\left\{
\begin{array}{rcl}
\dot{W}^N(t,X_i^0)&=&\dot{V}_i(t) = g+N F^N(t,Y^N(t,X_i^0))\\
W^N(0,X_i^0)&=&\w_{0}(X_i^0)
\end{array}
\right.
\end{array}
\end{equation}
such that 
\begin{equation}\label{eq:def_F^N}
X_i(t)=Y^N(t,X_i^0), \quad V_i(t)=W^N(t,X_i^0).
\end{equation}
Note that $(Y^N,W^N)$ is only defined on the support of $\rho^N$. We will never evaluate it outside of that support.
We also define the characteristics associated to the the limit equation \eqref{eq:MVS}
\begin{equation}\label{eq:flow}
\begin{array}{lr}
\left\{
\begin{array}{rcl}
\dot{Y}(t,x)&=&W(t,x) =\w(t,Y(t,x))\\
Y(0,x)&=& x
\end{array}
\right.,
&
\left\{
\begin{array}{rcl}
\dot{W}(t,x)&=&g+u(t,Y(t,x))-W(t,x)\\
W(0,x)&=& \w_0(x)
\end{array}
\right. .
\end{array}
\end{equation}
We can construct now the transport plan $\gamma_t(\d x_1,\d v_1,\d x_2,\d v_2)$, having $f^N(t)$ as a first marginal and $f$ as second marginal as follows. Let $\lambda_0(\d x_1,\d x_2) $ be an optimal transport plan for the $\mathcal{W}_2(\rho^N(0),\rho_0)$
 Wasserstein distance which we can define as $$\lambda_0= (T,\Id) \# \rho_0$$
 with $T$ an optimal transport map (see for example \cite{Santambrogio15}, $T$ exists as soon as $\rho_0$ is absolutely continuous with respect to the Lebesgue measure).  We denote
$$
\gamma_0=(P_x^1,\w_0\circ P_x^1,P_x^2,\w_0\circ P_x^2)\# \lambda_0,
$$
where $P^1_x, P^2_x$  are the projections on $\R^3 \times \R^3$ to the first and second space variables, respectively, i.e. \begin{align}
    P_x^1(x_1,x_2) = x_1 && P_x^2(x_1,x_2) = x_2.
\end{align}
We will in the following slightly abuse the notation by also denoting $P_x^1, P_x^2$ the projections that act on $(\R^3)^4$ by
\begin{align}
    P_x^1(x_1,v_1,x_2,v_2) = x_1, && P_x^2(x_1,v_1,x_2,v_2) = x_2.
\end{align}
We observe that $\gamma_0$ is a transport plan for $(f^N(0), f(0))$ and define $\gamma_t(\d x_1,\d v_1,\d x_2,\d v_2)$ as
$$
\gamma_t= (Y^N(t)\circ P^1_x,W^N(t) \circ P^1_x, Y(t) \circ P^2_x,W(t) \circ P^2_x )\# \gamma_0.
$$
 We also set 
$$
\lambda_t= (Y^N(t)\circ P^1_x, Y(t) \circ P^2_x )\# \lambda_0,
$$  
which is a transport plan for $(\rho_N(t),\rho(t))$. We set $\eta$ the transport cost associated to  the transport plan $\gamma_t$ 
 \begin{align}\label{eq:def_eta}
 \eta(t)&=\int_{(\R^3)^4} (|x_1-x_2|^2+|v_1-v_2|^2) \gamma_t(\d x_1,\d v_1,\d x_2,\d v_2) \\
 &= \int_{(\R^3)^4} (|Y^N(t,x_1)-Y(t,x_2)|^2+| W^N(t,x_1)-W(t,x_2)|^2   ) \gamma_0 (\d x_1,\d v_1,\d x_2,\d v_2) \\
&= \int_{\R^3 \times \R^3} (|Y^N(t,x_1)-Y(t,x_2)|^2+| W^N(t,x_1)-W(t,x_2)|^2   ) \lambda_0 (\d x_1, \d x_2)
 \end{align}
 \subsection{Definition of the intermediate transport plans and velocity fields} 
In what follows we use the following notation for any $B\subset \R^3$:
$$
\delta_B= \frac{1_{B}}{|B|}
$$
For $4 R< d \leq \dmin(t)/6$, we denote
\begin{align}
    B^d_i(t) &\coloneqq B_d(X_i(t)), \label{B_i^d} \\
    f_N^d(t,\cdot) &\coloneqq \frac 1 N \sum_i \delta_{B_i^d(t)} \otimes \delta_{V_i(t)},\label{rho_N^d} \\
    \rho_N^d(t,\cdot) & =  \frac 1 N \sum_i\delta_{B_i^d(t)}.
\end{align}
We then introduce the following approximation $w_N^d  \in \dot H^1(\R^3)$ for $u_N$ defined as the solution to
\begin{align}
    -\Delta w_N^d(t,\cdot) + \nabla p = \sum_i F_i(t) \delta_{B_i^d(t)}, \quad \dv w_N^d = 0,
\end{align}
The estimates below are obtained in \cite[Lemma 6.4]{HoferSchubert23b}, we slightly adapt the proof (see Section \ref{appendix}) in order to get the following result
\begin{lem} \label{lem1}
Under the assumption that $d_{\min} \geq 8 R$, we have 
\begin{align}
\| w_N^d\|_{W^{1,\infty}(\R^3)} & \lesssim |NF|_\infty \left(1+\frac{1}{N}S_2+\frac{1}{Nd^2} \right)\label{eq:estimate_w_N^d_W^{1,infty}} \\
\left| \fint_{A_i}\omega_i (u_N-w_N^d)\right| &\lesssim  d|NF|_\infty \left(\frac{1}{N}S_2+\frac{1}{Nd^2} \right)\left(1+\frac{1}{N^{3/2}d^2}\right)\label{eq:estimate_u^N-w_N^d}\\
 \sup_{x \in \R^3} \|u_N - w_N^d \|_{L^2(B_1(x))} &\lesssim \left(N^{-1/2} d^{1/2} + N^{-5/3}\right)\left(1+\frac{1}{N}S_2 \right) |NF|_\infty.   \label{eq:u.w.L^2_loc} 
\end{align}
\end{lem}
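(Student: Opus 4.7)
The proof adapts \cite[Lemma~6.4]{HoferSchubert23b} and relies on the explicit Oseen representation
\begin{align*}
w_N^d(x)=\sum_i F_i \,\Phi_i^d(x),\qquad \Phi_i^d(x):=\fint_{B_d(X_i)}\Phi(x-y)\dd y,
\end{align*}
where $\Phi$ is the Stokeslet, with $|\Phi(z)|\lesssim |z|^{-1}$ and $|\nabla \Phi(z)|\lesssim |z|^{-2}$. The condition $d\leq d_{\min}/6$ ensures that at most one particle lies in $B_{2d}(x)$ for any $x\in\R^3$. For \eqref{eq:estimate_w_N^d_W^{1,infty}} I would split the sum over $i$ accordingly: the unique close particle contributes at most $|F|_\infty/d$ to $|w_N^d|$ and $|F|_\infty/d^2$ to $|\nabla w_N^d|$, while the far particles contribute $|F|_\infty \sum_j|x-X_j|^{-1}\lesssim |F|_\infty(1/d+S_1)$ and $|F|_\infty \sum_j|x-X_j|^{-2}\lesssim |F|_\infty(1/d^2+S_2)$ (using $|x-y|\gtrsim|X_i-X_j|$ with $X_i$ the close particle). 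Cauchy--Schwarz $S_1\leq \sqrt{NS_2}$ and the inequality $\sqrt{S_2/N}\leq 1+S_2/N$, together with $|F|_\infty=|NF|_\infty/N$, rearrange into the stated bound.

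For \eqref{eq:estimate_u^N-w_N^d} the key identity is the Stokes law $\fint_{A_i}\omega_i u_N = V_i - NF_i$ from \eqref{eq:br}. On the $w_N^d$ side I would decompose
\begin{align*}
\fint_{A_i}\omega_i w_N^d = F_i M_{ii} + \sum_{j\neq i} F_j \fint_{A_i}\omega_i(x)\Phi_j^d(x)\dd x,
\end{align*}
with $M_{ii}=\fint_{A_i}\omega_i \Phi_i^d$ purely geometric and $|M_{ii}|\lesssim 1/d$ by scaling. For $j\neq i$, a first-order Taylor expansion of $\Phi_j^d$ around $X_i$, combined with $\fint_{A_i}\omega_i=\Id$ from \eqref{omega}, approximates each cross term by $F_j \Phi(X_i-X_j)$ with remainder $O(d|F_j||X_i-X_j|^{-2})$. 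Matching these leading Stokeslet contributions against the corresponding terms in the Fax\'en expansion of $\fint_{A_i}\omega_i u_N$ implicit in Proposition~\ref{prop:app_Stokes} produces the cancellation responsible for the prefactor $d$, leaving an error bounded by $|F|_\infty/d + d|F|_\infty S_2 + R^3 S_2 S_4^{1/2}|F|_2$. Substituting $|F|_\infty=|NF|_\infty/N$, $|F|_2\lesssim R|V|_2$, and $S_4\lesssim d_{\min}^{-4}$ gives the claimed right-hand side with the correction factor $1+1/(N^{3/2}d^2)$.

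For \eqref{eq:u.w.L^2_loc} I would split $B_1(x)$ into $\bigcup_j B_{2d}(X_j)\cap B_1(x)$ and its complement. On each $B_{2d}(X_j)$, both $u_N$ and $w_N^d$ are controlled by the local Stokeslet of amplitude $|F_j|$; integrating the Stokeslet singularity on $B_{2d}$ yields $\|u_N-w_N^d\|_{L^2(B_{2d}(X_j))}\lesssim d^{1/2}|F|_\infty$. Summing over the $\mathcal{O}(N)$ relevant particles produces the $N^{-1/2}d^{1/2}|NF|_\infty$ contribution. On the complement, the uniform pointwise Stokeslet cancellation underlying \eqref{eq:estimate_u^N-w_N^d}, combined with the $R^3$ decay of the potential-dipole correction to the single-sphere exterior Stokes flow, yields the residual $N^{-5/3}(1+S_2/N)|NF|_\infty$.

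The main obstacle is the cancellation in \eqref{eq:estimate_u^N-w_N^d}: one must carefully identify the leading-order ambient Stokeslet interaction $\sum_{j\neq i}F_j\Phi(X_i-X_j)$ inside both $\fint_{A_i}\omega_i u_N$ (via the Fax\'en-type reflection underpinning Proposition~\ref{prop:app_Stokes}) and $\fint_{A_i}\omega_i w_N^d$ (via Taylor expansion of smeared Stokeslets), so that only the $O(d)$ remainder survives. Preserving the sharp $d^{1/2}$ power and the $N^{-5/3}$ correction in \eqref{eq:u.w.L^2_loc} requires analogous careful tracking of both the local Stokeslet singularity and the single-sphere multipole expansion on the complement.
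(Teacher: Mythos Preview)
Your treatment of \eqref{eq:estimate_w_N^d_W^{1,infty}} is fine and matches the paper.

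For \eqref{eq:estimate_u^N-w_N^d} and \eqref{eq:u.w.L^2_loc} your proposal diverges from the paper's method and has genuine gaps. The paper does \emph{not} match Stokeslet contributions directly. Instead it argues by duality: for fixed $W\in\R^3$ it introduces the solution $\varphi$ of $-\Delta\varphi+\nabla p=\omega_i^T W\,\tfrac{1}{|A_i|}\1_{A_i}$, $\dv\varphi=0$, and computes via integration by parts
\begin{align*}
W\cdot\fint_{A_i}\omega_i(w_N^d-u_N)=\sum_j\int_{\partial B_j}(\varphi-\varphi_j)\cdot\sigma(u_N,p_N)n-\sum_j(\varphi_j-\varphi_j^d)\cdot F_j,
\end{align*}
where $\varphi_j=\fint_{\partial B_j}\varphi$ and $\varphi_j^d=\fint_{B_j^d}\varphi$. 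The second sum is bounded using $|\varphi_j-\varphi_j^d|\lesssim |W|\,d\,d_{ij}^{-2}$ (decay of $\nabla\Phi$), giving the $d(\tfrac{S_2}{N}+\tfrac{1}{Nd^2})$ piece. The first sum is handled by introducing the point-Stokeslet field $w_0$ and a divergence-free extension $\psi$ with $\psi=\varphi-\varphi_j$ on each $B_j$, so that the sum equals $2\int\nabla\psi\cdot D(u_N-w_0)$; the energy bound $\|u_N-w_0\|_{\dot H^1}\lesssim R|F|_\infty S_2$ then produces the correction factor $(1+N^{-3/2}d^{-2})$. The cancellation you seek is thus built into the integration by parts, not obtained by matching expansions.

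Your argument for \eqref{eq:estimate_u^N-w_N^d} appeals to a ``Fax\'en expansion of $\fint_{A_i}\omega_i u_N$ implicit in Proposition~\ref{prop:app_Stokes}''. But that proposition gives only the identity \eqref{eq:br} and the \emph{bounds} \eqref{eq:estimate_mean_u^N_A_i}--\eqref{eq:estimate_mean_u^N_A_i_Lip}; it does not assert that $\fint_{A_i}\omega_i u_N=\sum_{j\neq i}F_j\Phi(X_i-X_j)+O(R^3S_2S_4^{1/2}|F|_2)$. You are therefore assuming the very cancellation you need to prove. Moreover, after invoking $\fint_{A_i}\omega_i u_N=V_i-NF_i$ you never explain what cancels the $V_i$ term; on the $w_N^d$ side only $F$-quantities appear.

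For \eqref{eq:u.w.L^2_loc} the paper runs the same duality scheme with $\varphi$ now solving $-\Delta\varphi+\nabla p=h$ for an arbitrary $h\in L^2(B_1(x_0))$. The power $N^{-1/2}d^{1/2}$ arises from the Poincar\'e--Sobolev inequality $|\varphi_j-\varphi_j^d|\lesssim d^{1/2}\|\nabla^2\varphi\|_{L^2(B_j^d)}$ followed by Cauchy--Schwarz in $j$ against $\|\nabla^2\varphi\|_{L^2(\R^3)}\lesssim 1$; the power $N^{-5/3}$ comes from the Sobolev embedding $\|\nabla\varphi\|_{L^2(\cup_j B_j)}\lesssim\|\nabla\varphi\|_6(NR^3)^{1/3}\lesssim N^{1/3}R$ in the boundary-integral term. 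Both mechanisms live on the dual side and are invisible to a direct near/far splitting of $u_N-w_N^d$. In particular your claim $\|u_N-w_N^d\|_{L^2(B_{2d}(X_j))}\lesssim d^{1/2}|F|_\infty$ from ``integrating the Stokeslet singularity'' is unclear: both $u_N$ and $w_N^d$ are bounded near $X_j$, so the natural pointwise bound is $d^{3/2}$ times an $L^\infty$ norm, and summing over $j$ does not recover the stated exponent without an $\ell^2$-type structure that your argument does not supply.
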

We introduce the following velocities keeping the same notations as in \cite{HoferSchubert23b}
\begin{align} \label{w_N^d}
    -\Delta \tilde w_N^d + \nabla p = \int (v - \tilde w_N^d)  f_N^d(\cdot,\dd v), \qquad \dv \tilde w_N^d = 0,
\end{align}
and $r_i \in \dot H^1(\R^3)$, $i=1,2$ are the solutions to 
\begin{align} \label{r_1}
           -\Delta r_1 +  \nabla p = \int (v_2 - r_1) \gamma (\cdot,\d v_1, \d x_2, \d v_2), \quad \dv r_1 = 0, 
\end{align}
and
\begin{align} \label{r_2}
  -\Delta  r_2 + \nabla p = \int (v_2 - \tilde w_N^d) \gamma^d(\cdot,\d v_1,\d x_2,\d v_2), \qquad \dv r_2 = 0.
\end{align}
Here, $\gamma^d=\gamma^d_t$ is constructed precisely as in \cite[p. 44]{HoferSchubert23b}:
We consider the
transport map  $T_d$ from $\rho_d$ to $\rho_N$ defined through $T_d(y) \coloneqq X_i$ in $B_i^d$ and the induced  plan 
$\gamma_t^{N,d} \coloneqq (T_d,\Id,\Id,\Id)\# f_N^d \in \Pi(f_N,f_N^d)$.
 By the Gluing Lemma (cf. \cite[Lemma 5.5]{Santambrogio15}), there exists a measure $\sigma \in \mathcal P((\R^3)^6)$ such that $(\pi_{1,2,3,4})\# \sigma = \gamma_t$ and $(\pi_{1,2,5,6})\# \sigma = \gamma^{N,d}$, where  $\pi$ denotes the projections on the indicated coordinates. We then denote $\gamma_t^d \coloneqq(\pi_{3,4,5,6})\# \sigma \in \Pi(f,f_N^d)$.
Since $\gamma_t^{N,d}$ is supported on $\{(x^N,v^N,x^d,v^d) : v^N = v^d\}$, we have
\begin{align}
    \int |v^d - v^N|^2 \dd \sigma(x^N,v^N,x,v,x^d,v^d) = \int |v^d - v^N|^2 \dd \gamma_t^{N,d}(x^N,v^N,x^d,v^d) = 0,
\end{align}
and thus $\sigma$ is supported on $\{(x^N,v^N,x,v,x^d,v^d) : v^N = v^d\}$.
In particular, it holds for all $\psi \in L^2_\rho(\R^3)$ that
\begin{align}
    \int \psi(x) v^N \dd \gamma_t(x^N,v^N,x,v) &= \int \psi(x) v^N \dd \sigma(x^N,v^N,x,v,x^d,v^d) \\
    &= \int \psi(x) v^d \dd \sigma(x^N,v^N,x,v,x^d,v^d) = \int \psi(x) v^d \dd \gamma^d_t(x,v,x^d,v^d).
\end{align}

We adapt the proof of  \cite[Theorem 2.6]{HoferSchubert23b} in order to obtain the following (see Section \ref{appendix} for the proof) result.
In the following, we use the standard notation for weighted spaces
\begin{align*}
	\|\varphi\|_{L^2_\rho} := \left(\int_{\R^3} |\varphi|^2 \rho \dd x\right)^{\frac 1 2}.
\end{align*}
\begin{lem} \label{lem2}
Assume that there exists a constant $\Lambda>1$ such that for all $N$ large enough
\begin{equation}\label{hyp_buckling_argument}
|NF|_\infty+|V|_\infty+ \frac{S_2}{N}\leq \Lambda , 
\end{equation}
then, there exists a constant $C_{\Lambda}$ depending monotonously on ${\Lambda}$ such that for $d=\frac{\Lambda^{-1/2}N^{-1/2}}{6}$
\begin{align}
    \|r_1-r_2\|_{L^2_\rho}+ \|r_2-\tilde w_N^d \|_{L^2_\rho} &\lesssim  \eta^{1/2}+C_{\Lambda} d,\label{eq:estimate_r_1-w_d^N} \\
    \|\tilde w_N^d - w_N^d\|_{L^2_{\rho_N^d}}+  \|\tilde w_N^d - w_N^d\|_{L^2_\rho} &\lesssim d C_{\Lambda}, \label{eq:estimate_diff_w_N^d}\\
    \|\nabla(u-r_1)\|_{L^2} + \|u - r_1\|_{L^2_\rho} &\leq  \eta^{1/2}.\label{eq:estimate_u-r_1}
\end{align}
 See \eqref{eq:def_eta} for the definition of $\eta$. 
\end{lem}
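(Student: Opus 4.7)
The plan is to adapt the proof of \cite[Theorem 2.6]{HoferSchubert23b} to the monokinetic setting. For each of the three estimates I subtract the defining Stokes equations for the two divergence-free velocity fields in question, test the difference weakly against itself, and reorganise the resulting energy identity using the probabilistic coupling most naturally associated to the pair: $\gamma$ for $u$ vs.~$r_1$, the gluing $\sigma$ for $r_1$ vs.~$r_2$, $\gamma^d$ (with $f_N^d$ as its second marginal) for $r_2$ vs.~$\tilde w_N^d$, and the point-versus-ball discretisation for $\tilde w_N^d$ vs.~$w_N^d$. The buckling hypothesis \eqref{hyp_buckling_argument}, together with Proposition~\ref{prop:app_Stokes}, Corollary~\ref{blub}, and the regularity estimate \eqref{eq:estimate_w_N^d_W^{1,infty}} from Lemma~\ref{lem1}, supplies the $\Lambda$-dependent constants on the right-hand sides.

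I would begin with the $\|\tilde w_N^d - w_N^d\|$ bound, which is self-contained: subtracting the two Stokes equations leaves a source $\frac{1}{N}\sum_i(V_i - NF_i)\delta_{B_i^d} - \tilde w_N^d \rho_N^d$. Corollary~\ref{blub} under \eqref{hyp_buckling_argument} and the choice $d = \Lambda^{-1/2}N^{-1/2}/6$ controls $|NF - V|_\infty$, and combining the energy estimate with \eqref{eq:estimate_w_N^d_W^{1,infty}} produces the $dC_\Lambda$ bound in both weighted norms. Next, $\|r_1 - r_2\|_{L^2_\rho} + \|r_2 - \tilde w_N^d\|_{L^2_\rho}$ is handled via $\sigma$: its support on $\{v^N = v^d\}$ and the bound $|x^N - x^d| \le d$ on $\supp\gamma^{N,d}$ decompose the source difference for $r_1 - r_2$ into an $\eta^{1/2}$-piece coming from the $(x,v)$-$(x^N,v^N)$ coupling underlying $\gamma$ and a $C_\Lambda d$-piece from the $(x^N,v^N)$-$(x^d,v^d)$ coupling; $r_2 - \tilde w_N^d$ is treated symmetrically through $\gamma^d$.

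For the final estimate $\|\nabla(u-r_1)\|_{L^2} + \|u-r_1\|_{L^2_\rho} \le \eta^{1/2}$, I test both \eqref{eq:MVS} and \eqref{r_1} against $e := u - r_1 \in \dot H^1$, rewrite both sources as integrals against $\gamma$, and telescope through $\pm e(x_2)\cdot r_1(x_2)$ to obtain an identity of the form
\begin{align*}
\|\nabla e\|_{L^2}^2 + \|e\|_{L^2_\rho}^2 = \int v_2\cdot\bigl(e(x_2)-e(x_1)\bigr)\,d\gamma + \int\bigl((e\cdot r_1)(x_1)-(e\cdot r_1)(x_2)\bigr)\,d\gamma,
\end{align*}
in which the coercive $\|e\|_{L^2_\rho}^2$ on the left arises from the mismatch between $u\rho$ and $r_1\rho^N$ in the two source terms. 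Both right-hand integrals are then controlled by Cauchy--Schwarz using $\int(|x_1-x_2|^2+|v_1-v_2|^2)\,d\gamma \le \eta$.

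The hard part will be the absence of a direct Lipschitz control on $e$, needed to bound the $v_2\cdot(e(x_2)-e(x_1))$ cross term, since $r_1$ itself has a Dirac-supported source so that $\|\nabla r_1\|_\infty$ is not available a priori. I expect to circumvent this by replacing $r_1$ in the cross term by $w_N^d$ (using the previously established $C_\Lambda d$-closeness of $r_1$ to $w_N^d$), transferring the Lipschitz bound $\|\nabla w_N^d\|_\infty \lesssim C_\Lambda$ from Lemma~\ref{lem1}, and by controlling the remaining $u$-contribution via an integration-along-segments estimate combined with the $L^\infty$-bound on $\rho$, which allows absorbing a fraction of $\|\nabla e\|_{L^2}^2$ back on the left-hand side. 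The indicated order of proof avoids any circularity.
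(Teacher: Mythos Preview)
Your plan for \eqref{eq:estimate_u-r_1} is based on a misreading of the equation for $r_1$. The whole point of $r_1$ is that it solves a Brinkman equation with the \emph{same} absorption term $r_1\rho$ as $u$ (the spatial variable in \eqref{r_1} lives on the $\rho$-side, and the source involves the $f^N$-velocity); with that structure, subtracting and testing with $e=u-r_1$ produces directly
\[
\|\nabla e\|_{L^2}^2+\|e\|_{L^2_\rho}^2=\int (v_1-v_2)\cdot e(x_2)\,\d\gamma\le \|e\|_{L^2_\rho}\,\eta^{1/2},
\]
and the estimate follows in one line. There are no cross-terms $e(x_2)-e(x_1)$ to control, and hence no need for Lipschitz bounds on $e$, substitution of $w_N^d$, or integration along segments. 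In fact your segment argument could not close: one endpoint $x_1$ ranges over the discrete set $\{X_i\}$, so the law of $(1-t)x_1+tx_2$ has density blowing up like $t^{-3}$, and $\int_0^1 t^{-3/2}\,\d t$ diverges. Your route would also produce extraneous $C_\Lambda d$ terms, whereas \eqref{eq:estimate_u-r_1} is an exact inequality.

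There is a second genuine gap in your treatment of $r_2-\tilde w_N^d$. Subtracting the defining equations and testing with $h=r_2-\tilde w_N^d$ gives no coercive $\|h\|_{L^2_\rho}^2$ on the left (the Brinkman term in \eqref{r_2} is $\tilde w_N^d\rho$, not $r_2\rho$), and the right-hand side again contains terms of the form $h(z)-h(z^d)$ with $z\sim\rho$, $z^d\sim\rho_N^d$ that you cannot bound without pointwise regularity of $h$. The paper does \emph{not} use an energy argument here: it writes $r_2-\tilde w_N^d$ explicitly via the Oseen tensor $\Phi$ and splits pointwise into three pieces $\psi_1+\psi_2+\psi_3$ --- a far-field part estimated through the decay of $\nabla\Phi$ and the $W^{1,\infty}$ bound on $w_N^d$ from Lemma~\ref{lem1}, a near-field part using that at most one ball $B_{3d}(X_i)$ contains a given point, and a correction $\tilde w_N^d-w_N^d$ handled by \eqref{eq:estimate_diff_w_N^d}. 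The reduction $\|r_1-r_2\|_{L^2_\rho}\lesssim\|r_2-\tilde w_N^d\|_{L^2_\rho}$ \emph{is} an energy argument (using that the $v$-currents agree through $\sigma$), but the hard work sits in the Oseen-tensor estimate.
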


\section{Proof of the main Theorem}\label{section3}

 We recall that we aim to perform a Gronwall type estimate for the transport cost $\eta(t)$  associated to  the transport plan $\gamma_t$ defined as
 \begin{align*}
 \eta(t)&=\int_{(\R^3)^4} (|x_1-x_2|^2+|v_1-v_2|^2) \gamma_t(\d x_1,\d v_1,\d x_2,\d v_2) \\
 &= \int_{(\R^3)^4} (|Y^N(t,x_1)-Y(t,x_2)|^2+| W^N(t,x_1)-W(t,x_2)|^2   ) \gamma_0 (\d x_1,\d v_1,\d x_2,\d v_2) \\
&= \int_{\R^3 \times \R^3} (|Y^N(t,x_1)-Y(t,x_2)|^2+| W^N(t,x_1)-W(t,x_2)|^2   ) \lambda_0 (\d x_1, \d x_2)
 \end{align*}
\paragraph{Step 1: \emph{Setting of the buckling argument.}}
Note that there exists a constant $\Lambda>1$ large enough such that 
\begin{equation}\label{hyp_buckling_0}
\underset{N}{\sup}\left(|NF|_\infty(0)+ |V|_\infty(0)+ \frac{S_2(0)}{N}\right) < \Lambda/8
\end{equation}
Indeed, from \eqref{hyp_S_2(0)} we have that $\frac{S_2(0)}{N}$ is uniformly bounded, on the other hand, since
 $V_i(0)=\w_0(X_i)$ we get using Corollary \ref{blub} for $N$ large enough
$$
|NF|_\infty(0)+|V|_\infty(0)+ \frac{S_2(0)}{N} \leq C\|\w_0\|_\infty \left(\frac{S_2(0)}{N}+ 1 \right)
$$ 
which yields the desired result.
Let $0<T^*\leq \bar T$ be the maximal time such that for all $t < T^*$ and all $N$ large enough
\begin{equation}\label{hyp_buckling_argument_bis}
|NF|_\infty(t)+ |V|_\infty(t)+ \frac{S_2(t)}{N}< \Lambda
\end{equation} 
Note that \eqref{hyp_buckling_argument_bis} implies in particular that for all $t < T^*$ and all $N$ large enough, 
$
\frac{1}{Nd_{\min}^2(t)} \leq \Lambda
$ and  $d_{\min(t)}\geq 8 R$.\\
We aim then to provide a lower bound for $T^*$ independent of $N$.
\medskip
\paragraph{Step 2: \emph{Control of the Wasserstein distance.}} We prove that for all $t < T^*$ and all $N$ large enough
\begin{equation}\label{eq:control_wasserstein}
\eta(t) \leq (\eta(0)+C_\Lambda N^{-1})e^{C t}
\end{equation}
with $C_\Lambda$ depending on $\Lambda$.
We recall the definition of $\eta$ from \eqref{eq:def_eta}
$$
 \eta(t)= \int_{\R^3 \times \R^3} (|Y^N(t,x_1)-Y(t,x_2)|^2+| W^N(t,x_1)-W(t,x_2)|^2   ) \lambda_0 (\d x_1, \d x_2)
$$
One has
\begin{align*}
\frac{1}{2}\frac{\d}{\d t} \eta(t) &=  \int_{\R^3 \times \R^3} (Y^N(t,x_1)-Y(t,x_2))\cdot (W^N(t,x_1)-W(t,x_2))d \lambda_0  \\
&+(W^N(t,x_1)-W(t,x_2))\cdot \left(N F^N(t,Y^N(t,x_1))- u(t,Y(t,x_2))+W(t,x_2) \right) d \lambda_0\\
& \leq \eta + \int_{\R^3 \times \R^3} \left|N F^N(t,Y^N(t,x_1))- u(t,Y(t,x_2))+W(t,x_2) \right|^2 d \lambda_0.
\end{align*}
The last right-hand term is estimated as
\begin{align*}
   & \int_{\R^3 \times \R^3} \left|N F^N(t,Y^N(t,x_1))- u(t,Y(t,x_2))+W(t,x_2) \right|^2 \dd \lambda_0\\
   &\leq   2\int_{\R^3 \times \R^3} \left|N F^N(t,Y^N(t,x_1))- u(t,Y(t,x_2))+W^N(t,x_1) \right|^2 \dd \lambda_0\\
    &+2 \int |W^N(t,x_1)-W(t,x_2)|^2 \dd \lambda_0\\
\end{align*} 
Using \eqref{eq:def_F^N}, \eqref{eq:br} and \eqref{ass:gamma} and setting $d=\frac{\Lambda^{-1/2} N^{-1/2}}{6}$ we get
\begin{align} \label{split.force.difference}
\begin{aligned}
  & \int_{\R^3 \times \R^3} \left|N F^N(t,Y^N(t,x_1))- u(t,Y(t,x_2)))+W^N(t,x_1) \right|^2  \dd \lambda_0\\
   &\leq  \frac{1}{N} \sum_i \left| \fint_{A_i}\omega_i (u_N-w_N^d)\right|^2 + \frac{1}{N} \sum_i \left| \fint_{A_i}\omega_i (w_N^d - w_N^d(X_i))\right|^2  \\
    &+  \int_{\R^3 \times \R^3} \left|w_N^d(x_1) - w_N^d(x_2)\right|^2  \dd \lambda_t+ \|w_N^d - u\|_{L^2_\rho}^2.
\end{aligned}
\end{align}
Using \eqref{eq:estimate_w_N^d_W^{1,infty}} and \eqref{eq:estimate_u^N-w_N^d} together with \eqref{hyp_buckling_argument_bis} and the fact that $d=\frac{\Lambda^{-1/2} N^{-1/2}}{6}$
\begin{align}\label{eq:estimate_1}
&    \frac{1}{N} \sum_i \left| \fint_{A_i}\omega_i (u_N-w_N^d)\right|^2 + \frac{1}{N} \sum_i \left| \fint_{A_i}\omega_i (w_N^d - w_N^d(X_i))\right|^2 
    +  \int_{\R^3 \times \R^3} \left|w_N^d(x_1) - w_N^d(x_2)\right|^2  \dd \lambda_0 \notag \\ 
    & \lesssim C_\Lambda d^2+\eta 
\end{align}
We set
$$
    u - w_N^d = u - r_1 + (r_1 - r_2) + (r_2 - \tilde w_N^d) + (\tilde w_N^d - w_N^d).
$$
with $r_1$, $r_2$ and $\tilde w_N^d $ defined in \eqref{r_1}, \eqref{r_2} and \eqref{w_N^d}.
Plugging estimates \eqref{eq:estimate_1}, \eqref{eq:estimate_diff_w_N^d}, \eqref{eq:estimate_r_1-w_d^N}, \eqref{eq:estimate_u-r_1} in \eqref{split.force.difference} for $d=\frac{\Lambda^{-1/2} N^{-1/2}}{6}$ yields
\begin{align}
    \int_{\R^3 \times \R^3} \left|N F^N(t,x_1)- u(t,Y^N(t,x_2)))+W^N(t,x_2) \right|^2  \dd \lambda_0 \lesssim C_\Lambda d^2 + \eta.
\end{align}
Gathering all the estimates yield 
$$
\frac{d}{dt} \eta \lesssim C_\Lambda N^{-1}+ \eta
$$
which allows to conclude.
\medskip

\paragraph{Step 3: \emph{Control of the minimal distance.}} We claim that there exists a constant $T_0 \leq T^* $ depending monotonously on $\|\nabla \w_0\|_{\infty}$ and $\Lambda$ such that the following holds for all $t<T_0$ and all $N$ large enough
 \begin{equation}\label{eq:control_d_min}
d_{ij}(t) \geq  \frac{1}{2}d_{ij}(0)
\end{equation}
Indeed, we have using \eqref{eq:ODEsN} \eqref{eq:br} and \eqref{eq:estimate_mean_u^N_A_i_Lip} for all $t \leq T^*$
 \begin{align*}
    \frac{d}{dt}|V_i-V_j| &\leq - |V_i-V_j|+ | \fint_{A_i} \omega_i u_N - \fint_{A_j} \omega_j u_N| \\
     &\leq   - |V_i-V_j|+C d_{ij} \left(  \frac{S_{2}}{N}  |FN|_\infty+ R^3S_2S_6^{1/2}|F|_2 \right) \\
     &\leq   -  |V_i-V_j|+C d_{ij} \left(  \frac{S_{2}}{N}  |FN|_\infty+ R^3S_2 \frac{1}{d_{\min}^3 \sqrt{N}}|FN|_\infty  \right)\\
 &\leq   -  |V_i-V_j|+C \Lambda^2 d_{ij}
 \end{align*}
where we used $S_6\leq S_4 d_{\min}^{-2}$ and $S_4 \lesssim \dmin^{-4}$ (see e.g.  \cite[Lemma 4.8]{NiethammerSchubert19}) together with \eqref{hyp_buckling_argument_bis}. Performing a Gronwall estimate yields
\begin{equation}\label{ineq:Vij}
|V_i-V_j|(t) \leq |V_i^0-V_j^0|e^{- t}+C\Lambda^2\int_0^t e^{s-t} d_{ij}(s) ds
\end{equation}
Let us first deduce from \eqref{ineq:Vij} an upper bound for $d_{ij}$ which in turn would yield, using \eqref{ineq:Vij}, a lower bound for $d_{ij}$. We have
\begin{align*}
d_{ij}(t)&\leq d_{ij}(0)\left(1 + \|\nabla \w_0\|_{\infty}\int_0^t  e^{-s}ds\right)+ C \Lambda^2\int_0^t \int_0^s e^{\tau-s} d_{ij}(\tau) d\tau ds  \\
&=  d_{ij}(0)\left(1+ \|\nabla \w_0\|_{\infty} (1-e^{- t}) \right)+ C \Lambda^2 \int_0^t \int_\tau^t e^{\tau-s} d_{ij}(\tau)  ds d\tau  \\
&=  d_{ij}(0)\left(1+ \|\nabla \w_0\|_{\infty} (1-e^{- t})\right)+ C \Lambda^2 \int_0^t (1-e^{\tau-t}) d_{ij}(\tau)   d\tau  \\
&\leq  d_{ij}(0)\left(1+ \|\nabla \w_0\|_{\infty} (1-e^{- t}) \right)+ C \Lambda^2 \int_0^t d_{ij}(\tau)   d\tau 
\end{align*}
which yields 
$$
\left(e^{-C\Lambda^2t} \int_0^t d_{ij}(s)ds \right)' \leq  d_{ij}(0)\left(1+ \|\nabla \w_0\|_{\infty} (1-e^{- t})\right) e^{-C \Lambda^2t}
$$
hence
\begin{multline}\label{bound1}
 \int_0^t  d_{ij}(s)ds \leq e^{C\Lambda^2 t} d_{ij}(0)\int_0^t \left(1+ \|\nabla \w_0\|_{\infty} (1-e^{- s})\right) e^{-C \Lambda^2s}ds\\
 =d_{ij}(0) \left[ \left(1+\|\nabla \w_0\|_{\infty} \right) \left(\frac{e^{C\Lambda^2t}-1}{C\Lambda^2} \right)-\|\nabla \w_0\|_\infty \frac{e^{C\Lambda^2t}-e^{-t}}{1+C\Lambda^2}  \right]
\end{multline}
On the other hand proceeding much as above and inserting \eqref{bound1} yields
\begin{align*}
d_{ij}(t) 
&\geq  d_{ij}(0)\left(1-(1-e^{-t}) \|\nabla \w_0\|_{\infty} \right) -\int_0^t C \Lambda^2 d_{ij}(\tau) d\tau \\
&\geq d_{ij}(0) \left( 2-e^{C\Lambda^2 t} + \frac{\|\nabla \w_0\|_\infty}{C\Lambda^2+1} \left(e^{- t}-e^{C \Lambda^2 t}   \right) \right) ,
\end{align*}
where the term between brackets is decreasing in time so that we can find $T_0$ depending monotonously on $\|\nabla \w_0\|_\infty$ and satisfying the desired inequality \eqref{eq:control_d_min}.


\paragraph{Step 4: Control of the $|\cdot|_\infty$ norm of the drag forces and velocities.} We claim 
that for all $t< T^*$ and $N$ large enough
\begin{equation}\label{eq:control_F_V}
\begin{array}{rcl}
|NF- V|_\infty(t) &\lesssim& \displaystyle  \left( \sqrt{\frac{S_2}{N}}+\frac{1}{\sqrt{N}}\frac{S_2}{N}\frac{1}{N^2d_{\min}^2}\right) e^{Ct}\\
|V|_\infty(t) &\leq & \displaystyle |V(0)|_\infty+C\int_0^t   \left( \sqrt{\frac{S_2}{N}}+\frac{1}{\sqrt{N}}\frac{S_2}{N}\frac{1}{N^2d_{\min}^2}\right) e^{Cs} ds
\end{array}
\end{equation}
for some universal constant $C>0$. Indeed, using \eqref{eq:ODEsN} we have
\begin{align*}
\frac{1}{2} \frac{d}{dt} \underset{i}{\sum} |V_i|^2 &=\underset{i}{\sum} V_i \cdot g -\underset{i}{\sum} V_i \cdot N F_i\\
& =\underset{i}{\sum} V_i \cdot g+ N\|\nabla u_N \|_{L^2(\R^3 \setminus \bigcup B_i)}^2\\
&\leq C(|V|_2^2+\sqrt{N}|V|_2)
\end{align*}
where we have used (see  \cite[Proposition 3.2]{HoferSchubert23} or \cite[Lemma 10]{Hillairet2018} for instance)
$$
\|\nabla u_N \|_{L^2(\R^3\setminus \bigcup B_i)}^2 \leq \frac{C}{N} |V|_2^2
$$
This proves that
$$
|V|_2(t) \leq C(\sqrt{N}+|V|_2(0)) e^{Ct} \leq C\sqrt{N}(1+ \|\w_0\|_\infty) e^{Ct}
$$
Hence, combined with Corollary \ref{blub} we get for $t\leq T^*$
$$
|NF- V|_\infty(t) \lesssim \left( \sqrt{\frac{S_2}{N}}+\frac{1}{\sqrt{N}}\frac{S_2}{N}\frac{1}{N^2d_{\min}^2}\right) e^{Ct}
$$
Now using again \eqref{eq:ODEsN} we have
$$
\frac{d}{dt}|V_i|\leq - |V_i| + |NF_i-V_i| 
$$
Using Gronwall yields 
$$
|V|_\infty(t)\leq |V(0)|_\infty e^{-t}+  \int_0^t \left( \sqrt{\frac{S_2}{N}}+\frac{1}{\sqrt{N}}\frac{S_2}{N}\frac{1}{N^2d_{\min}^2}\right)e^{Cs} ds
$$
\paragraph{Step 5:\emph{ Conclusion of the buckling argument}.}
From \eqref{eq:control_d_min} and \eqref{hyp_buckling_0}, we have for all $t< T_0$ and $N$ large enough
$$
S_2(t) \leq 4S_2(0)\leq \Lambda/2
$$
Hence, using \eqref{eq:control_F_V} and \eqref{hyp_buckling_0},  there exists a constant $C>0$ such that for all $t <T_0$ and $N$ large enough
$$
|NF|_\infty(t) +|V|_\infty(t) \leq C(\sqrt{\Lambda}e^{Ct} + \|\w_0\|_\infty)
$$
Hence, one can find $T_1>0$ depending on $\|\w_0\|_\infty$ and $\Lambda$ such that for all $t\leq \min(T_0,T_1)$
$$
|NF|_\infty(t) +|V|_\infty(t) \leq \Lambda/2
$$
which proves that we can choose at least $T^*= \min(T_0,T_1)$ depending only on $\|\w_0\|_{W^{1,\infty}(\R^3)}$, and $\Lambda$ i.e. \eqref{hyp_S_2(0)}.
Eventually, \eqref{ineq:wasserstein2} follows from \eqref{eq:control_wasserstein}.

\appendix
\section{Proof of Lemmas \ref{lem1} and \ref{lem2}}\label{appendix}
In the following section we use the notation $\Phi$ for the Oseen tensor i.e. the fundamental solution of the Stokes equation on $\R^3$ given explicitly by
$$
\Phi(x)= \frac{1}{8\pi |x|}\left(\mathbb{I}+ \frac{x \otimes x}{|x|^2}\right).
$$
\begin{proof}[Proof of Lemma \ref{lem1}] \textbf{Step 1:} \emph{Proof of \eqref{eq:estimate_w_N^d_W^{1,infty}}.}
We split $w_N^d = \sum_i w_{N,i}$, where $w_{N,i} \in \dot H^1(\R^3)$ is the solution to
\begin{align}
         -\Delta w^d_{N,i} + \nabla p = F_i \delta_{B_i^d}, \quad \dv w^d_{N,i} = 0.
\end{align}
Then for $k = 0,1$ and all $x \in \R^3$ (e.g. by a scaling argument or explicit convolution with the fundamental solution)
\begin{align}
    |\nabla^k w^d_{N,i}(x)| \lesssim \frac{\abs{F_i}}{|x - X_i|^{1+k} + d^{1+k}}.
\end{align}

This implies         \begin{align}
        | w_N^d(x)| & \lesssim \sum_i \frac{\abs{F_i}}{|x - X_i| + d} \le \left( \frac 1N S_{1} +\frac{1}{Nd}\right) |N F|_\infty.\\
        |\nabla w_N^d(x)| & \ls \sum_i \frac{| F_i|}{|x - X_i|^2 + d^2} \le  \left( \frac 1 N S_{2}+\frac{1}{Nd^2}\right) |N F|_\infty . \quad \label{eq:nabla_w}
    \end{align}

\noindent \textbf{Step 2:} \emph{Proof of \eqref{eq:estimate_u^N-w_N^d}.}
We follow the proof of item (i) in \cite[Lemma 5.3]{HoferSchubert23}.
For given $W\in \R^3$ consider the solution $\varphi$ to
\begin{align}
    -\Delta \varphi+\nabla p= \omega_i^T W \frac{1}{\abs{A_i^d}}\1_{A_i^d}, ~ \dv \varphi=0 \quad \text{in} ~\R^3.
\end{align}
We compute
\begin{align}
\begin{aligned}
    W\cdot\fint_{A_i^d}\omega_i (w_N^d-u_N)\dd x&=\fint_{A_i^d}(w_N^d-u_N)\cdot \omega_i^T W \dd x
    = 2 \int_{\R^3} \nabla (w_N^d-u_N) \cdot D\varphi\dd x \\
    &=\sum_j \fint_{B^d_j} \varphi\dd x \cdot F_j+\sum_j \int_{\partial B_j} \varphi\cdot (\sigma(u_N,p_N)n)\dd \mathcal{H}^2\\
    &=\sum_j \int_{\partial B_j} (\varphi-\varphi_j)\cdot (\sigma(u_N,p_N)[u_N]n)\dd \mathcal{H}^2-\sum_j (\varphi_j-\varphi_j^d)\cdot F_j ,
    \end{aligned}
    \label{eq:comp_test1}
\end{align}
where $\varphi_j\coloneqq\fint_{\partial B_j} \varphi\dd \mathcal{H}^2$ and $\varphi_j^d\coloneqq\fint_{B^d_j} \varphi\dd x$.
Using that $\varphi= \Phi\ast \omega_i^T W \frac{1}{\abs{A_i^d}}\1_{A_i^d}$, the fact that $\dist(A_i^d,A_j^d)\gs d_{ij}$ for $i\neq j$, the second property from \eqref{omega}, and the decay of $\nabla \Phi$, we have for all $j\neq i$ that $|\varphi_j^d - \varphi_j| \lesssim \abs{W}d d_{ij}^{-2}$. Moreover $\|\varphi\|_\infty\ls |W|d^{-1}$ implies the bound $|\varphi_i-\varphi_i^d|\ls |W|dd^{-2}$.
Thus we get
     \begin{align}\label{eq:phi_diff}
     \begin{aligned}
       \sum_j |\varphi_j^d - \varphi_j||F_j| &\lesssim d\abs{W}  \left(\frac 1 N S_{2}+\frac{1}{Nd^{2}}\right) |N F|_\infty
       \end{aligned}
    \end{align}
    In order to estimate the first term on the right-hand side of \eqref{eq:comp_test1}, we consider the solution  $w_0$ to 
    \begin{align}\label{eq:tilde_u}
        -\Delta w_0 + \nabla p = \sum_{ j} \delta_{\partial B_{ j}} F_{ j}, ~ \dv w_0 = 0 \quad \text{in} ~ \R^3.
    \end{align}
    Here, $\delta_{\partial B_{ j}}=\frac{1}{|\partial B_{ j}|}\mathcal{H}^2_{|\partial B_{ j}}$ is the normalized Hausdorff measure. Then, by \cite[Lemma 5.4]{HoferSchubert23} and \eqref{ass:gamma}, 
    \begin{align} \label{est.u-utilde}
       \|w_0 - u_N\|^2_{\dot H^1(\R^3)} \lesssim  \| \nabla w_0\|^2_{L^2(\cup_{ j} B_{ j})} \lesssim R^2  |F|_\infty^2 S_2^2.
    \end{align}

By a classical extension result (see e.g. {\cite[Lemma 3.5]{Hofer18MeanField}}), there exists a divergence free function $\psi \in \dot H^1(\R^3)$ such that for all $1 \leq j \leq N$, $\psi = \varphi - \varphi_j$ in $B_j$ and
\begin{align}
		\|\nabla \psi\|^2_{2} \lesssim  \|\nabla \varphi \|^2_{L^2(\cup_i B_i)} \lesssim |W|^2 R^3\left(S_4 + \frac 1 {d^4} \right),
\end{align}
where we used that $\nabla \varphi$ decays like $ \nabla \Phi$.
{Using the weak formulation for $w_0$ defined as in \eqref{eq:tilde_u}, we have
\begin{align}
    \int_{\R^3} \nabla \psi \cdot D   w_0 \dd x = 0.
\end{align}
}
Hence,
\begin{align} \label{eq:double_diff1}
\begin{aligned}
    \left|\sum_j \int_{\partial B_j} (\varphi-\varphi_j)\cdot (\sigma(u_N,p_N)n)\dd \mathcal{H}^2\right|  & = \left|2 \int_{\R^3} \nabla \psi \cdot D  (u_N - w_0) \dd x\right|  \ls \norm{\nabla(u_{ N}-w_0)}_2\norm{\nabla \psi}_2 \\
    & \lesssim R^{5/2}  |F|_\infty  S_2 \left(S_4 + \frac 1 {d^4} \right)^{\frac 1 2}   |W|\\
    &\ls d \frac{ R^{3/2}}{d^2}  |F|_\infty  S_2    |W|\\
     & \ls  d |NF|_\infty  |W|\left(\frac{1}{N}S_2 \right)\frac{1}{N^{3/2}d^2} ,
     \end{aligned}
\end{align}
where we used $R<d<\dmin$, $S_4 \lesssim \dmin^{-4}$ (see e.g.  \cite[Lemma 4.8]{NiethammerSchubert19})  in the second-to-last estimate and \eqref{ass:gamma} in the last estimate. Combining \eqref{eq:phi_diff} and \eqref{eq:double_diff1} in \eqref{eq:comp_test1} and taking the sup over all $|W|=1$ finishes the proof of \eqref{eq:estimate_u^N-w_N^d}.
\\

\noindent \textbf{Step 3:} \emph{Proof of \eqref{eq:u.w.L^2_loc}.}
The proof of \eqref{eq:u.w.L^2_loc} is similar: Fix $x_0 \in \R^3$ and let $h \in L^2(B_1(x_0))$ with $\|h\|_2 \leq 1$. After extending $h$ by zero to a function on $\R^3$, consider the solution $\varphi \in \dot H^1$ to
\begin{align}
    -\Delta \varphi+\nabla p= h, ~ \dv \varphi=0 \quad \text{in} ~\R^3.
\end{align}
By regularity we have
\begin{align}\label{eq:nabla2phi}
    \|\nabla^2 \varphi\|_{2} \lesssim \|h\|_2\le 1.
\end{align}
Analogously to \eqref{eq:comp_test1}, we obtain
\begin{align}
    \int_{B_1(x_0)} h \cdot (w_N^d-u_N)\dd x 
    = \sum_j \int_{\partial B_j} (\varphi-\varphi_j)\cdot (\sigma(u_N,p_N)n)\dd \mathcal{H}^2-\sum_j (\varphi_j-\varphi_j^d)\cdot F_j.
    \label{eq:comp_test2}
\end{align}
Denoting $(\nabla \varphi)_j^d$ the corresponding average of the gradient, we observe that by symmetry $\int_{\partial B_j} (\nabla \varphi)_j^d (x - X_j) \dd \mathcal H^2(x) =0$.
We resort to the Poincar\'e-Sobolev inequality
\begin{align}
    \|\psi\|_{L^\infty (B_j^d)}  \lesssim d^{1/2} \|\nabla^2  \psi\|_{L^2(B_j^d)} \qquad \text{for all } \psi \in W^{2,2}(B_j^d) \text{ with } \int_{B_j^d} \psi = 0, ~ \int_{B_j^d} \nabla \psi = 0.
\end{align}
Hence, 
\begin{align}
    |\varphi_j-\varphi_j^d| = \left|\fint_{\partial B_j} \varphi - \varphi_j^d  - (\nabla \varphi)_j^d (x - X_j) \dd \mathcal H^2(x) \right|  \lesssim d^{1/2} \|\nabla^2  \varphi\|_{L^2(B_j^d)},
\end{align}
and using \eqref{eq:nabla2phi}, we estimate 
     \begin{align} \label{phi_diff.2}
       \sum_j |\varphi_j^d - \varphi_j||F_j| \lesssim N^{1/2} d^{1/2} \|\nabla^2  \varphi\|_{2}  | F|_\infty \ls N^{-1/2} d^{1/2}  |N F|_\infty.
    \end{align}
On the other hand, using Sobolev-embedding and again \eqref{eq:nabla2phi} yields
\begin{align}
    \|\nabla \varphi\|_{L^2(\cup_j B_j)} \lesssim \|\nabla \varphi\|_{6} (N R^3)^{1/3} \lesssim N^{1/3} R.
\end{align}
Hence, adapting the argument for \eqref{eq:double_diff1} yields
\begin{align} \label{eq:double_diff2}
\begin{aligned}
     \left|\sum_j \int_{\partial B_j} (\varphi-\varphi_j)\cdot (\sigma(u,p)n)\dd \mathcal{H}^2 \right| \ls N^{1/3} R^{2} S_2 \abs{F}_\infty 
     \ls  N^{-5/3} |NF|_\infty \left(\frac{1}{N} S_2 \right).
     \end{aligned}
\end{align}
Inserting  \eqref{phi_diff.2} and \eqref{eq:double_diff2}  into \eqref{eq:comp_test2} yields \eqref{eq:u.w.L^2_loc}.
\end{proof}

\begin{proof}[Proof of Lemma \ref{lem2}]
Note that the assumption \eqref{hyp_buckling_argument} ensures that $ 8 R \leq d\leq  d_{\min}/6$ for $d=\frac{\Lambda^{-1/2} N^{-1/2}}{6}$.\\
\noindent \textbf{Step 1:} \emph{Proof of \eqref{eq:estimate_diff_w_N^d} .}
By slightly adapting the proof of Lemma \cite[Lemma 6.5]{HoferSchubert23b} together with Sobolev embedding, we have
\begin{equation}
    \|\tilde w_N^d - w_N^d\|_{L^2_{\rho_N^d}}+ \|\tilde w_N^d - w_N^d\|_{L^2_\rho} \lesssim d\left( \frac{1}{N} S_{2}+\frac{1}{Nd^{2}}\right) |NF|_\infty.
\end{equation}
which yields the desired result using \eqref{hyp_buckling_argument} with $d=\frac{\Lambda^{-1/2}N^{-1/2}}{6}$.\\
\noindent \textbf{Step 2:} \emph{Proof of \eqref{eq:estimate_r_1-w_d^N} .}
By \cite[Eq. (6.46)]{HoferSchubert23b},  we have
$$
\|r_1-r_2\|_{L^2_\rho} \lesssim \|r_2-\tilde w_N^d\|_{L^2_\rho}
$$
It remains then to estimate the second term in left hand side of \eqref{eq:estimate_r_1-w_d^N}.

\noindent\textbf{Substep 2.1:} \emph{Splitting.} We observe that
\begin{align}
    (r_2 - \tilde w_N^d)(x) &= \int \Phi(x-z^d)(\tilde w_N^d(z^d) - v^d) - \Phi(x-z)(\tilde w_N^d(z) - v^d) \dd \gamma^d_t(z,v,z^d,v^d)  \\
    &=: \psi_1(x) + \psi_2(x) + \psi_3(x), 
\end{align}
where
\begin{align}
    \psi_1(x) &\coloneqq  \int (\Phi(x-z^d)(w_N^d(z^d) - v^d)  - \Phi(x-z)(w_N^d(z) - v^d))  \1_{\{|x - z^d| > 2d\}} \dd \gamma_t^d(z,v,z^d,v^d), \\
     \psi_2(x) &\coloneqq  \int (\Phi(x-z^d)(w_N^d(z^d) - v^d)  - \Phi(x-z)(w_N^d(z) - v^d))  \1_{\{|x - z^d| \leq 2d\}} \dd \gamma_t^d(z,v,z^d,v^d), \\
    \psi_3(x) &\coloneqq  \int \Phi(x-z^d)(\tilde w_N^d(z^d) - w_N^d(z^d)) - \Phi(x-z)(\tilde w_N^d(z) - w_N^d(z))  \dd \gamma_t^d(z,v,z^d,v^d).
\end{align}

\medskip

\noindent\textbf{Substep 2.2:} \emph{Estimate of $\psi_1$.}
We have
\begin{align}\label{eq:psi1}
\begin{aligned}
&|\psi_1(x)|\\
& \lesssim \| w_N^d\|_{1,\infty} \int |z^d-z| \left(\frac{1}{|x-z^d|}+\frac{1}{|x-z|}+\frac{1}{|x-z^d|^2}+\frac{1}{|x-z|^2}  \right)\\
&\times (1+|v^d|)  \1_{\{|x - z^d| > 2d\}} \dd \gamma_t^d(z,v,z^d,v^d)\\
&\lesssim (1+|V|_\infty) \| w_N^d\|_{1,\infty} \left( \int |z^d-z|^2 \left(\frac{1}{|x-z^d|^2}+\frac{1}{|x-z|^2}  \right)\dd \gamma_t^d(z,v,z^d,v^d) \right)^{1/2}\\
& \times \left[  \int  \1_{\{|x - z^d| > 2d\}} \dd \gamma_t^d(z,v,z^d,v^d)  + \int \left(\frac{1}{|x-z^d|^2}+\frac{1}{|x-z|^2}  \right) \1_{\{|x - z^d| > 2d\}} \dd \gamma_t^d(z,v,z^d,v^d)   \right]^{1/2}
\end{aligned}
\end{align}
%
For the second factor on the right-hand side of \eqref{eq:psi1}, due to  $d \leq \dmin/6$, we observe that 
\begin{align} 
    \sup_{x \in \R^3} \int \frac{1}{|x-z^d|^{2}} \1_{\{|x - z^d| > 2d\}} \dd \gamma_t^d(z,v,z^d,v^d) &\lesssim \frac 1 N \left(\sup_{i} \sum_{j \neq i} |X_i - X_j|^{-2}+d^{-2}\right) \\
    & \leq  \frac 1 N (S_{2}+d^{-2}) 
\end{align}
similarly since $\rho(t) \in \mathcal{P}(\R^3) \cap L^\infty(\R^3)$ 
\begin{equation}\label{eq:bound_int_sing}
  \sup_{x \in \R^3} \int \frac{1}{|x-z|^{2}}  \rho(t,\dd z) \leq C (\|\rho\|_1+\|\rho\|_\infty)
\end{equation}
Using \eqref{eq:estimate_w_N^d_W^{1,infty}}, \eqref{hyp_buckling_argument} together with Fubini and again \eqref{eq:bound_int_sing} yields
\begin{align} \label{psi.1}
\begin{aligned}
   \|\psi_1\|^2_{L^2_\rho} &\lesssim  \int |z^d-z|^2  \dd  \gamma_t^d(z,v,z^d,v^d)\\ 
    &\lesssim \int |z^N-z|^2  \dd  \gamma_t(z^N,v^N,z,v) + \int |z^N-z^d|^2  \dd  \gamma_t^{N,d}(z^N,v^N,z^d,v^d) \\
   & \lesssim \eta + \int_{\R^3} |z^d - T^d(z^d)|^2  \dd  \gamma_t^{N,d}(z^N,v^N,z^d,v^d) \\
   & \lesssim  \eta + d^2
   \end{aligned}
\end{align}
where we used the definitions of $\gamma^{N,d}$ and $T^d$ after \eqref{r_2} in the step to the last line.

\medskip

\noindent\textbf{Substep 2.3:} \emph{Estimate of $\psi_2$.}
Regarding $ \psi_2$, on the one hand we estimate, using $\|w_N^d\|_\infty \lesssim 1$ due to \eqref{eq:estimate_w_N^d_W^{1,infty}} and $d \leq d_{\min}/6$ as well as using that the balls $B_{3d}(X_i)$ are disjoint 
\begin{align}
    & \int \left(\int |\Phi(x-z^d)| (|w_N^d(z^d)| + |v^d|) \1_{\{|x - z^d| \leq 2d\}} \dd \gamma^d(z,v,z^d,v^d)\right)^2 \dd \rho(x) \\
     &  \qquad \qquad \lesssim \int \left( \frac 1 N \sum_i \fint_{B_d(X_i)} \frac{1 + |V_i|}{|x-y|}  \dd y  \1_{B_{3d}(X_i)}(x) \right)^2 \dd \rho(x) \\
     &  \qquad \qquad  \lesssim \int \frac 1 {N^2}  \sum_i \frac{1+|V_i|^2}{d^2}  \1_{B_{3d}(X_i)}(x) \dd \rho(x) 
      \\
      & \qquad \qquad \lesssim  \frac 1 {N^2d^2}  (1 + |V|^2_\infty)  \|\rho\|_1
\end{align}
For the second and third estimate we used that for fixed $x \in \R^3$ at most one term in the sum is nonzero which allows us to interchange the square and the sum.
On the other hand,
\begin{align}
   & \int \left(\int |\Phi(x-z)| (|w_N^d(z)|+  |v^d|) \1_{\{|x - z^d| \leq 2d\}} \dd \gamma^d(z,v,z^d,v^d)\right)^2 \dd \rho(x) \\
   &\qquad \qquad \lesssim \int \bra{\int |x-z|^{-2} \dd \rho(z)} (\|w_N^d\|^2_\infty+|V|_\infty^2)  \left(\int \1_{\{|x-z^d|<2d\}} \dd \rho_N^d(z^d) \right) \dd \rho(x)\\
   &\qquad \qquad\ls  (\|w_N^d\|^2_\infty+|V|_\infty^2) N^{-1}
\end{align}
where we used that for each $x \in \R^3$
$$
\int \1_{\{|x-z^d|<2d\}} \dd \rho_N^d(z^d)=\frac{1}{N}\underset{i}{\sum} \fint_{B_d(X_i)} \1_{\{|x-z|<2d\}} \dd z \leq \frac{1}{N}\underset{i}{\sum}  \1_{\{|x-X_i|<3d\}}  \leq \frac{1}{N}
$$
since the balls $B_{3d}(X_i)$ are disjoint.
Thus,  using \eqref{hyp_buckling_argument}
\begin{align} \label{psi.2}
    \|\psi_2\|^2_{L^2_\rho} \lesssim N^{-1} \lesssim d^2.
\end{align}

\medskip

\noindent\textbf{Substep 2.4:} \emph{Estimate of $\psi_3$.}
We have
    \begin{align}
        |\psi_3(x) | &=  |\Phi \ast \left((\rho_N^d - \varrho)(w_N^d -\tilde w_N^d)\right)|(x) \\
       & \left(\norm{\frac 1 {|x - \cdot|}}_{L^2_{\rho}} + \norm{\frac 1 {|x - \cdot|}}_{L^2_{\rho_N^d}}\right) \left(
        \|w_N^d -\tilde w_N^d\|_{L^2_{\rho_N^d }} + \|w_N^d -\tilde w_N^d\|_{L^2_{\varrho}}\right)
    \end{align}
Thus, by Fubini,
$$
\int \left(\norm{\frac 1 {|x - \cdot|}}_{L^2_{\rho}} + \norm{\frac 1 {|x - \cdot|}}_{L^2_{\rho_N^d}}\right)^2  \rho(x) \dd x \lesssim \underset{y}{\sup} \norm{\frac 1 {|\cdot - y|}}_{L^2_{\rho}}^2 (\|\rho_N^d\|_1+ \|\rho\|_1)
$$          
hence, thanks to \eqref{eq:bound_int_sing},  and  \eqref{eq:estimate_diff_w_N^d} we get
    \begin{align} \label{psi.3}
        \|\psi_3\|_{L^2_{\rho}} \lesssim
        \sup_y \left(\norm{\frac 1 {|\cdot - y|}}_{L^2_{\rho}}\right) \left(
        \|w_N^d -\tilde w_N^d\|_{L^2_{\rho_N^d }} + \|w_N^d -\tilde w_N^d\|_{L^2_{\varrho}}\right) \lesssim d. \qquad
        \end{align}
       
\medskip

\noindent \textbf{Step 3:} \emph{Proof of \eqref{eq:estimate_u-r_1}.}
Testing the equation for $u$ and $r_1$ by $u - r_1$ yields
\begin{equation}
    \|\nabla(u-r_1)\|_{L^2}^2 + \|u - r_1\|_{L^2_\rho}^2 =   \int (v_1 - v_2)(u - r_1) \dd \gamma (\d x_1,\d v_1, \d x_2, \d v_2) \leq \|u - r_1\|_{L^2_\rho} \eta^{1/2}
\end{equation}

\end{proof}

\section*{Acknowledgements}

The authors are grateful for the opportunity of two intensive one-week stays at Mathematisches Forschungszentrum Oberwolfach as Oberwolfach Research Fellows which was the starting point of this article. The authors were supported by the project Suspensions, grants ANR-24-CE92-0028 of the French National Research Agency (ANR), HO 6767/3-1 and SCHU 3800/1-1 of the German  Research Foundation (DFG).

 \begin{refcontext}[sorting=nyt]
\printbibliography

@mastersthesis{Braham25,
    AUTHOR = {Braham, Uriel},
     TITLE = {The Vlasov-Stokes Equation: Existence and Uniqueness of Mono-kinetic Solutions},
   SCHOOL = {{R}heinische {F}riedrich-{W}ilhelms-{U}niversit\"at Bonn},
      YEAR = {2025},
      url = {https://drive.google.com/file/d/14H6J2b5U6uiprwnLLSj8AzeKXUNoysIT/view?usp=sharing},
}

@article{ChoiKwan16,
title = {The Cauchy problem for the pressureless Euler/isentropic Navier–Stokes equations},
journal = {Journal of Differential Equations},
volume = {261},
number = {1},
pages = {654-711},
year = {2016},
issn = {0022-0396},
doi = {https://doi.org/10.1016/j.jde.2016.03.026},
url = {https://www.sciencedirect.com/science/article/pii/S0022039616001315},
author = {Young-Pil Choi and Bongsuk Kwon},
keywords = {Pressureless Euler equations, Compressible Navier–Stokes equations, Coupled hydrodynamic equations, Global existence of classical solutions, Large-time behavior},
abstract = {We present a new hydrodynamic model consisting of the pressureless Euler equations and the isentropic compressible Navier–Stokes equations where the coupling of two systems is through the drag force. This coupled system can be derived, in the hydrodynamic limit, from the particle-fluid equations that are frequently used to study the medical sprays, aerosols and sedimentation problems. For the proposed system, we first construct the local-in-time classical solutions in an appropriate L2 Sobolev space. We also establish the a priori large-time behavior estimate by constructing a Lyapunov functional measuring the fluctuation of momentum and mass from the averaged quantities, and using this together with the bootstrapping argument, we obtain the global classical solution. The large-time behavior estimate asserts that the velocity functions of the pressureless Euler and the compressible Navier–Stokes equations are aligned exponentially fast as time tends to infinity.}
}

@article {Hillairet2018,
    AUTHOR = {Hillairet, M.},
     TITLE = {On the homogenization of the {S}tokes problem in a perforated
              domain},
   JOURNAL = {Arch. Ration. Mech. Anal.},
  FJOURNAL = {Archive for Rational Mechanics and Analysis},
    VOLUME = {230},
      YEAR = {2018},
    NUMBER = {3},
     PAGES = {1179--1228},
      ISSN = {0003-9527},
   MRCLASS = {35B27 (35Q35)},
  MRNUMBER = {3851058},
       DOI = {10.1007/s00205-018-1268-7},
       URL = {https://doi.org/10.1007/s00205-018-1268-7},
}

@article{Hofer23,
  title={Homogenization of the Navier--Stokes equations in perforated domains in the inviscid limit},
  author={H{\"o}fer, Richard M.},
  journal={Nonlinearity},
  volume={36},
  number={11},
  pages={6019},
  year={2023},
  publisher={IOP Publishing}
}

@article {BernardDesvillettesGolseRicci18,
    AUTHOR = {Bernard, Etienne and Desvillettes, Laurent and Golse, Fran\c{c}ois
              and Ricci, Valeria},
     TITLE = {A derivation of the {V}lasov-{S}tokes system for aerosol flows
              from the kinetic theory of binary gas mixtures},
   JOURNAL = {Kinet. Relat. Models},
  FJOURNAL = {Kinetic and Related Models},
    VOLUME = {11},
      YEAR = {2018},
    NUMBER = {1},
     PAGES = {43--69},
      ISSN = {1937-5093},
   MRCLASS = {76T10 (35B25 35Q20 35Q83 76T15 82C40)},
  MRNUMBER = {3708181},
       DOI = {10.3934/krm.2018003},
       URL = {https://doi.org/10.3934/krm.2018003},
}

@article{FlandoliLeocataRicci19,
  title={The Vlasov-Navier-Stokes equations as a mean field limit.},
  author={Flandoli, Franco and Leocata, Marta and Ricci, Cristiano},
  journal={Discrete \& Continuous Dynamical Systems-Series B},
  volume={24},
  number={8},
  year={2019},
  pages = {3741-3753}
}

@article {BernardDesvillettesGolseRicci17,
    AUTHOR = {Bernard, Etienne and Desvillettes, Laurent and Golse, Fran\c{c}ois
              and Ricci, Valeria},
     TITLE = {A derivation of the {V}lasov-{N}avier-{S}tokes model for
              aerosol flows from kinetic theory},
   JOURNAL = {Commun. Math. Sci.},
  FJOURNAL = {Communications in Mathematical Sciences},
    VOLUME = {15},
      YEAR = {2017},
    NUMBER = {6},
     PAGES = {1703--1741},
      ISSN = {1539-6746},
   MRCLASS = {35Q83 (35B25 35Q20 76T10 76T15 82C40)},
  MRNUMBER = {3668954},
       DOI = {10.4310/CMS.2017.v15.n6.a11},
       URL = {https://doi.org/10.4310/CMS.2017.v15.n6.a11},
}

@article{Mecherbet19,
  title={Sedimentation of particles in Stokes flow},
  author={Mecherbet, Amina},
  journal={Kinetic and Related Models},
  year={2019},
  pages={995--1044},
  volume={12},
 number={5}
}

@article {Hofer18MeanField,
    AUTHOR = {H{\"o}fer, Richard M.},
     TITLE = {Sedimentation of inertialess particles in {S}tokes flows},
   JOURNAL = {Comm. Math. Phys.},
  FJOURNAL = {Communications in Mathematical Physics},
    VOLUME = {360},
      YEAR = {2018},
    NUMBER = {1},
     PAGES = {55--101},
      ISSN = {0010-3616},
   MRCLASS = {76T20 (35Q35 76D07)},
  MRNUMBER = {3795188},
       DOI = {10.1007/s00220-018-3131-y},
       URL = {https://doi.org/10.1007/s00220-018-3131-y},
}

@article{NiethammerSchubert19,
  title={A local version of Einstein's formula for the effective viscosity of suspensions},
  author={Niethammer, Barbara and Schubert, Richard},
  journal={SIAM J. Math. Anal.},
  volume={52},
  number={3},
  pages={2561-–2591},
  year={2020}
}

@article{Hofer&Schubert,
  title={The influence of Einstein’s effective viscosity on sedimentation at very small particle volume fraction},
  author={H\"ofer, Richard M. and Schubert, Richard},
 journal = {Annales de l'Institut Henri Poincaré C, Analyse non linéaire},
volume = {38},
number = {6},
pages = {1897-1927},
year = {2021},
issn = {0294-1449},
doi = {https://doi.org/10.1016/j.anihpc.2021.02.001},
}

@article{HoferSchubert23,
	AUTHOR = {H\"ofer, Richard M. and Schubert, Richard},
	TITLE = {Sedimentation of particles with very small inertia {I}:
	{C}onvergence to the transport-{S}tokes equation},
	JOURNAL = {Duke Math. J.},
	FJOURNAL = {Duke Mathematical Journal},
	VOLUME = {174},
	YEAR = {2025},
	NUMBER = {4},
	PAGES = {615--684},
	ISSN = {0012-7094,1547-7398},
	MRCLASS = {35Q70 (70F40 76D07 76T20)},
	MRNUMBER = {4905534},
	MRREVIEWER = {Mehmet\ Pakdemirli},
	DOI = {10.1215/00127094-2024-0038},
	URL = {https://doi.org/10.1215/00127094-2024-0038},
}

@article{HoferSchubert23b,
author={H\"ofer, Richard M. and Schubert, Richard},
 title={Sedimentation of particles with very small inertia II:
Derivation, Cauchy problem and hydrodynamic
limit of the Vlasov-Stokes equation},
 journal={arXiv preprint arXiv:2311.01891},
year={2023}
}

@article {JabinOtto04,
    AUTHOR = {Jabin, Pierre-Emmanuel and Otto, Felix},
     TITLE = {Identification of the dilute regime in particle sedimentation},
   JOURNAL = {Comm. Math. Phys.},
  FJOURNAL = {Communications in Mathematical Physics},
    VOLUME = {250},
      YEAR = {2004},
    NUMBER = {2},
     PAGES = {415--432},
      ISSN = {0010-3616},
     CODEN = {CMPHAY},
   MRCLASS = {76T20 (76D07)},
  MRNUMBER = {2094523},
MRREVIEWER = {Kenneth H. Karlsen},
       DOI = {10.1007/s00220-004-1126-3},
       URL = {http://dx.doi.org/10.1007/s00220-004-1126-3},
}

@article{HuangTangZou23,
  title={Global well-posedness and optimal time decay rates of solutions to the pressureless Euler-Navier-Stokes system},
  author={Huang, Feimin and Tang, Houzhi and Zou, Weiyuan},
  journal={arXiv preprint arXiv:2307.11581},
  year={2023}
}

@article {Duerinckx23,
	AUTHOR = {Duerinckx, Mitia},
	TITLE = {Semi-dilute rheology of particle suspensions: derivation of
	{D}oi-type models},
	JOURNAL = {Arch. Ration. Mech. Anal.},
	FJOURNAL = {Archive for Rational Mechanics and Analysis},
	VOLUME = {248},
	YEAR = {2024},
	NUMBER = {6},
	PAGES = {Paper No. 94, 55},
	ISSN = {0003-9527,1432-0673},
	MRCLASS = {76T20 (35Q35 35Q83)},
	MRNUMBER = {4804228},
	MRREVIEWER = {An\'ibal\ Coronel},
	DOI = {10.1007/s00205-024-02047-y},
	URL = {https://doi.org/10.1007/s00205-024-02047-y},
}

@article{Santambrogio15,
  title={Optimal transport for applied mathematicians},
  author={Santambrogio, Filippo},
  journal={Birk{\"a}user, NY},
  volume={55},
  number={58-63},
  pages={94},
  year={2015},
  publisher={Springer}
}

@article{FlandoliLeocataRicci21,
  title={The Navier--Stokes--Vlasov--Fokker--Planck system as a scaling limit of particles in a fluid},
  author={Flandoli, Franco and Leocata, Marta and Ricci, Cristiano},
  journal={Journal of Mathematical Fluid Mechanics},
  volume={23},
  number={2},
  pages={1--39},
  year={2021},
  publisher={Springer}
}
 \end{refcontext}

\end{document}